\documentclass[a4paper,11pt]{amsart}

\usepackage{a4wide} 

\usepackage{color}
\usepackage{hyperref}
\usepackage{tensor}
\usepackage{enumitem} 
\setcounter{secnumdepth}{5}

\usepackage{amsfonts,amssymb,amsmath,amsthm,graphicx,latexsym}
\usepackage{mathrsfs,dsfont}
\usepackage{pgfplots}
\usepackage{tikz}
\usepackage{bm}
\usepackage{caption}


\newcommand{\R}{\mathbb R}
\newcommand{\C}{\mathbb C}
\newcommand{\N}{\mathbb N}
\newcommand{\Z}{\mathbb Z}

\newtheorem{thm}{Theorem}[section]
\newtheorem{prop}[thm]{Proposition}
\newtheorem{lemma}[thm]{Lemma}
\newtheorem{cor}[thm]{Corollary}
\newtheorem{rem}[thm]{Remark}
\newtheorem{exm}[thm]{Examples}
\newtheorem{df}[thm]{Definition}

\newcommand{\tp}{\mathbin{\hbox{$\bigcirc$\hbox to
      0pt{\hspace{-0.81em}$\scriptstyle\top$\hfil}}}}

\begin{document}

\title[$(p, q)$-Touchard polynomials and  Stirling numbers]%
  {$(p, q)$-analogues of the generalized Touchard polynomials and  Stirling numbers}
\author{Lahcen Oussi}
\address{Institute of Mathematics, University of Wroc\l aw, Poland.}
\email{lahcen.oussi@math.uni.wroc.pl}

\begin{abstract}
In this paper we introduce a $(p, q)$-deformed analogues of the  generalized Touchard polynomials and Stirling numbers, the post-quantum analogues of the $q$-deformed generalized Touchard polynomials and Stirling numbers. The connection between these deformations is established. A recurrence relation for the $(p, q)$-deformed generalized Touchard polynomials is expounded, elucidating a $(p, q)$-deformation of Spivey's relation.
\end{abstract}
\keywords{Touchard polynomial; $(p, q)$-calculus; $(p, q)$-Stirling number; $(p, q)$-Bell number; Spivey relation; Dobinski formula.}
\subjclass[2010]{11B73, 05A30, 11B65, 11S05, 05A99} 
\maketitle

\section{Introduction}\label{intro}
The Stirling numbers of the second kind, denoted by $S(n, k)$, count the number of ways to partition a set of size $n$ into $k$ non-empty disjoint subsets. They appear as coefficients in the normal ordering of $\left(X\frac{d}{dx}\right)^n$ as follows
\begin{equation}\label{snor}
\left(X\frac{d}{dx}\right)^n=\sum_{k=0}^{n}S(n, k)X^k\left(\frac{d}{dx}\right)^k,
\end{equation}
where $X$ denotes the operator of multiplication with the variable, that is,  $Xf(x)=xf(x)$, and $\frac{d}{dx}$ is the derivative operator with respect to $x$.
The relation \eqref{snor} has deep connection in the physical literature, which was rediscovered by Katriel \cite{K1947}. Namely, it is connected with the creation operator $a$ and the annihilation operator $a^{\dagger}$ satisfying the commutation relation $aa^{\dagger}-a^{\dagger}a=1$ in the Boson Fock space \cite{MADC, MMM}.\\
Moreover, the Stirling numbers of the second kind $S(n, k)$ appear also as coefficients in the expansion of 
$$x^n=\sum_{k=0}^{n}S(n, k)\prod_{j=0}^{k-1}(x-j),$$
and they satisfy the recurrence formula 
\begin{equation}\label{rcs}
S(n, k)=S(n-1, k-1)+kS(n-1, k).
\end{equation}
The Bell numbers $B_{n}$ are known by the following relation 
$$B_{n}=\sum_{k=0}^{n}S(n, k),$$
and they satisfy the recursive formula
\begin{equation}\label{rbn}
B_{n+1}=\sum_{k=0}^{n} \binom{n}{k} B_{k}.
\end{equation}
In 2008, Spivey \cite{MZS} generalized the relation \eqref{rbn} as follows 
\begin{equation}\label{csp}
B_{n+m}=\sum_{j=0}^{m}\sum_{k=0}^{n}k^{m-j}S(n, k)\binom{m}{j}B_{j}.
\end{equation}
The above formula is known in the literature as ``Spivey's Bell number formula''.\\
Subsequently, in the same year, Katriel \cite{JK2008} derived a $q$-deformed formula of \eqref{csp}:
\begin{equation}\label{qsp}
B_{n+m}(q)=\sum_{j=0}^{m}\sum_{k=0}^{n}[k]_{q}^{m-j}q^{jk}S_{q}(n, k)\binom{m}{j}B_{j}(q),
\end{equation}
where 
\begin{equation}\label{qstr}
S_{q}(n, k)=q^{k-1}S(n-1, k-1)+[k]_{q}S_{q}(n-1, k),
\end{equation}
are the $q$-Stirling numbers of the second kind with the initial value $S_{q}(0, 0)=1$, and $[k]_{q}:=\frac{1-q^k}{1-q}$ denotes the $q$-number \cite{FHJ1909, FHJ19010, VKPC2002}. Consequently, the associated $q$-Bell numbers are given by 
$$B_{n}(q)=\sum_{k=0}^{n}S_{q}(n, k).$$
Recently, the present author \cite{LO} derived a $(p, q)$-deformed variant of Eq. \eqref{csp}. The resulting relation can be expressed as follows
\begin{equation}\label{pqs}
B_{n+m}(p, q; 1)=\sum_{k=0}^{n}\sum_{j=0}^{m}p^{\binom{k}{2}}[k]_{p, q}^{m-j}q^{jk}S_{p, q}(n, k)\binom{m}{j}B_{j}(p, q; p^{n+m-j}),
\end{equation}
where
\begin{equation}\label{pqstr}
S_{p, q}(n, k)=p^{n-k}q^{k-1}S(n-1, k-1)+[k]_{p, q}S_{p, q}(n-1, k),
\end{equation}
are the $(p, q)$-Stirling numbers of the second kind, with $S_{p, q}(0, 0)=1$, $[k]_{p, q}:=\frac{p^k-q^k}{p-q}$ denotes the $(p, q)$-number, which reduces to $[k]_{q}$ for $p=1$, and $B_{n}(p, q; 1)=\sum_{k=0}^{n}p^{\binom{k}{2}}S_{p, q}(n, k)$ are the corresponding $(p, q)$-Bell numbers in analogy to the classical and deformed cases.\\
Obviously, for $p=1$, Eqs. \eqref{pqs} and \eqref{pqstr} reduce to \eqref{qsp} and \eqref{qstr}, respectively. If further $q=1$, they reduce to \eqref{csp} and \eqref{rcs}, respectively.\\
It is worthwhile to mention that various generalizations of Stirling and Bell numbers have been extended in several ways by several authors, see for instance \cite{PBKAP2003, KNB2, LC1932, BSENPCTM2010, OHTM2017, FTH1985, WL2009, WL, TMMSMS2011, TMMSMS, MAMPBKAP2005, MS2003}. In particular, Mansour et al. considered in \cite{TMMSMS2011} the noncommutative operators $U$ and $V$ satisfying the commutation relation $UV=VU+hV^{s}$ (for $s\in{\R}$ and $h\in{\C}\setminus \{0\}$), and introduced a new family of generalized Stirling numbers $\mathfrak{S}_{s; h}(n, k)$ by 
\begin{equation}\label{newgsn}
\left(VU\right)^{n}=\sum_{k=1}^{n}\mathfrak{S}_{s; h}(n, k)V^{s(n-k)+k}U^{k}.
\end{equation}
Moreover, they introduced in \cite{TMMSMS} the $q$-version of the above generalized Stirling numbers, which appear in the normal ordering of $(VU)^n$ of the noncommutative operators $U$ and $V$ satisfy the $q$-commutation relation $UV=qVU+hV^{s}$. Namely
\begin{equation}\label{qgstr1}
\left(VU\right)^n=\sum_{k=1}^{n}\mathfrak{S}_{s; h}(n, k|q)V^{s(n-k)+k}U^{k},
\end{equation}
and 
\begin{equation}\label{qgstirling}
\mathfrak{S}_{s; h}(n+1, k|q)=q^{s(n+1-k)+k-1}\mathfrak{S}_{s; h}(n, k-1|q)+h[s(n-k)+k]_{q}\mathfrak{S}_{s; h}(n, k|q)
\end{equation} 
for all $n\geq 0$, $k\geq 1$, with the initial values $\mathfrak{S}_{s; h}(n, 0|q)=\delta_{n, 0}$ and $\mathfrak{S}_{s; h}(0, k|q)=\delta_{0, k}$ for all $n, k\in{\N}_{0}:=\{0, 1, 2, \ldots\}$, with the corresponding generalized Bell numbers 
\begin{equation}\label{qgbell}
\mathfrak{B}_{s; h}(n|q):=\sum_{k=0}^{n}\mathfrak{S}_{s; h}(n, k|q). 
\end{equation}
In 2015, Mansour and Schork \cite{TMMS} derived the following  $q$-deformed generalization of Spivey's relation \eqref{csp}
\begin{equation}\label{qgsp}
\begin{split}
\mathfrak{B}_{\frac{m-1}{m}; [m]_ {q}}(n+l|q^m) &=\sum_{k=0}^{n}\sum_{j=0}^{l}\bigg(
\begin{bmatrix}
n \\
k 
\end{bmatrix}_{q^{m-1}, h_{m, j+(m-1)l}(q)}\\
&\times [j+(m-1)l]_{q}^{n-k}\mathfrak{S}_{\frac{m-1}{m}; [m]_{q}}(l, j|q^m)q^{k(j+(m-1)l)}\mathfrak{B}_{\frac{m-1}{m}; [m]_{q}}(k| q^m)\bigg),
\end{split}
\end{equation}
where $h_{m, s}(q):=q^s\frac{[m-1]_{q}}{[s]_{q}}$ and $
\begin{bmatrix}
n \\
k 
\end{bmatrix}_{(q, h)}:=
\begin{bmatrix}
n \\
k 
\end{bmatrix}_{q}\prod_{i=0}^{k-1}(1+h[i]_{q})$ is the $(q, h)$-binomial coefficients \cite{HBB}, with $\begin{bmatrix}
n \\
k 
\end{bmatrix}_{q}:=\frac{[n]_{q}!}{[k]_{q}![n-k]_{q}!}$ denotes the $q$-Gauss binomial coefficients.\\
In the present work (Section \ref{sec3}), we derive the $(p, q)$-analogues of the  generalized Stirling numbers  and Spivey's relation above. 

The Touchard polynomials (also known as exponential polynomials or Bell polynomials) have many applications and play important rule in areas of probability and statistics, see e.g., \cite{KNB1, LC1979, DSKTK2015, DSKTKTMJJS2016, TMMS2015, TMSMMS2012}. In particular, the Touchard polynomial $T_{n}(\lambda)$ is the $n$-th moment of a random variable $Y$ which has Poisson distribution and expected value $\lambda$.\\
In an operational framework, the Touchard polynomials can be defined \cite{KNB1, KNB2, Detal, SR, GCR} by the Rodrigues-like formula:
\begin{equation}\label{ctchrd}
T_{n}(x)=e^{-x}\left(X\frac{d}{dx}\right)^ne^x\quad \text{ for }n\in{\N}_{0}.
\end{equation}
In 2012, Dattoli et al. \cite{Detal} introduced a generalization of the Touchard polynomials \eqref{ctchrd} for higher order $m\in{\N}_{0}$ as follows:
\begin{equation}\label{gtchrd}
T_{n}^{(m)}(x)=e^{-x}\left(X^m\frac{d}{dx}\right)^ne^x, \text{ for } n\in{\N}_{0},
\end{equation}
which reduce to the classical Touchard polynomials \eqref{ctchrd} for $m=1$. Moreover, they established the following recursive formula:
\begin{equation}\label{rctch}
T_{n+1}^{(m)}(x)=x^m\left(1+\frac{d}{dx}\right)T_{n}^{(m)}(x).
\end{equation}
Mansour et al. extended the generalized Touchard polynomials for negative integer order $m\in{\Z}$ in \cite{TMMSMS}, and furthermore, for arbitrary real order $m\in{\R}$ in \cite{TMMS2013}. On the other hand, Mansour and Schork \cite{TMMS} introduced a $q$-deformation of the generalized Touchard polynomials for $m\in{\R}$ by the following 
\begin{equation}\label{qgtchrd}
T_{n; q}^{(m)}(x)=E_{q}(-x)(X^mD_{q})^ne_{q}(x),
\end{equation}
where $D_{q}$ denotes the analogue of the derivative operator $\frac{d}{dx}$ (also called Jackson derivative), given by $D_{q}f(x)=\frac{f(x)-f(qx)}{1-q}$, $E_{q}(x)=\sum_{n\geq 0}\frac{q^{\binom{2}{n}}x^n}{[n]_{q}!}$ and $e_{q}(x)=\sum_{n\geq 0}\frac{x^n}{[n]_{q}!}$ are the $q$-exponential functions, and $[n]_{q}!=\prod_{j=1}^{n}[j]_{q}$ is the $q$-factorial function of the $q$-number $[n]_{q}$.\\
The $q$-deformed generalized Touchard polynomials satisfy the recursion formula \cite{TMMS}:
\begin{equation}\label{rqtch}
T_{n; q}^{(m)}(x)=x^m\Big(1+E_{q}(-x)e_{q}(qx)D_{q}\Big)T_{n; q}^{(m)}(x),
\end{equation}
which reduces to \eqref{rctch} for q=1.\\
Using the normal ordering \eqref{snor}
and the fact that (see \cite{TMMS})
\begin{equation*}
(X^mD_{q})^n=X^{n(m-1)}\sum_{k=0}^{n}\mathfrak{S}_{\frac{m-1}{m}; [m]_{q}}(n, k|q^m)X^kD_{q}^k,
\end{equation*}
one obtains directly from \eqref{ctchrd} and \eqref{qgtchrd}
the following relationships between the Stirling numbers $S(n, k)$ (resp. $q$-generalized Stirling numbers $\mathfrak{S}_{s; h}(n, k|q)$) and the Touchard polynomials $T_{n}(x)$ (resp. $q$-generalized Touchard polynomials $T_{n; q}^{(m)}(x)$) :
\begin{equation}\label{noctchrd}
T_{n}(x)=\sum_{k=0}^{n}S(n, k)x^k=:B_{n}(x) ,
\end{equation}
and 
\begin{equation}\label{nogtchrd}
T_{n; q}^{(m)}(x)=x^{n(m-1)}\sum_{k=0}^{n}\mathfrak{S}_{\frac{m-1}{m}; [m]_{q}}(n, k|q^m)x^k=x^{n(m-1)}\mathfrak{B}_{\frac{m-1}{m}; [m]_{q}|n; q^m}(x), 
\end{equation}
where $B_{n}(x)$ and $\mathfrak{B}_{s; h|n; q}(x):=\sum_{k=0}^{n}\mathfrak{S}_{s; h}(n, k|q)x^k$ are the associated Bell polynomials and $q$-generalized Bell polynomials, respectively.\\
Noting that for $q=1$, the undeformed generalized Touchard polynomials $T_{n}^{(m)}(x)$ satisfy 
$$T_{n}^{(m)}(x)=x^{n(m-1)}\sum_{k=0}^{n}S_{m, 1}(n, k)x^k,$$
where, $S_{m, 1}(n, k)=\mathfrak{S}_{\frac{m-1}{m}; m}(n, k|1)$ are another version of the generalized Stirling numbers of the second kind considered for instance by Lang \cite{WL}.

The structure of the paper is outlined as follows:\\
In Section \ref{sec2} we recall and fix some elementary notions and terminology about $(p, q)$-calculus, which are useful for the rest of the paper. Section \ref{sec3} contains the main result and is divided into two subsections. In Subsection \ref{subsec1}, the $(p, q)$-deformed generalized Stirling numbers of the second kind are introduced, the case $p=1$ corresponds to the $q$-version. In Subsection \ref{subsec2}, we discuss the $(p, q)$-deformed generalized Touchard polynomials for arbitrary real order and their relation to the $ (p, q)$-deformed generalized Stirling numbers of the second kind. A recurrence relation for these polynomials and some other properties are derived as well as the $(p, q)$-deformed generalized Spivey formula and the $(p, q)$-Dobiński formula. Finally, in Section \ref{sec4} we outline some remarks and mention some possible open questions related to this study for future research. 
\section{Preliminaries}\label{sec2} 
In this section we review some basic tools and terminology about $(p, q)$-calculus which we will use in the subsequent sections. For more details about $(p, q)$-calculus and related aspects, we refer the reader to \cite{CR, GV, GVRTMAPNAAM, PNS2018}.\\
For $x\in{\R}$ with $p\neq q$, the $(p, q)$-number (or twin-basic number) $[x]_{p, q}$ is defined as
\begin{equation}\label{pqnumber}
[x]_{p, q}:=\frac{p^x-q^x}{p-q}.
\end{equation} 
If $n$ is non-negative integer, then

\begin{displaymath}
[n]_{p, q} := \begin{cases}
\frac{p^n-q^n}{p-q}=\sum\limits_{k=1}^{n}p^{n-k}q^{k-1}, & \text{if $n\in{\N}:=\{1, 2, 3, \ldots\}$;}\\
0, & \text{if $n=0$ .}
\end{cases}
\end{displaymath}
The $(p, q)$-factorial is given by 
\begin{displaymath}
[n]_{p, q}!:= \begin{cases}
\prod\limits_{k=1}^{n}[k]_{p, q}, & \text{if $n\in{\N}$;}\\
1, & \text{if $n=0$ ,}
\end{cases}
\end{displaymath}
and the $(p, q)$-Gaussian binomial coefficient is defined by 
\begin{displaymath}
\begin{bmatrix}
n \\
k 
\end{bmatrix}_{p, q}:= \begin{cases}
\frac{[n]_{p, q}!}{[n-k]_{p, q}![k]_{p, q}!}, & \text{if $0\leq k \leq n$;}\\
0, & \text{if $k>n\geq 0$ or $k<0$ .}
\end{cases}
\end{displaymath}
Moreover, we define the following operators: 
\begin{enumerate}
\item The $(p, q)$-derivative operator $D_{p, q}$:
\begin{displaymath}
D_{p, q}f(x)=\begin{cases}
\frac{f(px)-f(qx)}{x(p-q)}, & \text{if $x\neq 0$;}\\
f'(0), & \text{if $x=0$,}
\end{cases}
\end{displaymath}
for functions $f$ which are differentiable at $x=0$.
\item The operator of multiplication $X$ with the variable:
$$Xf(x)=xf(x)$$
\item The Fibonacci operator $N_{p}$ \cite{JCMDCH}:
$$N_{p}f(x)=f(px).$$

\end{enumerate}
It is clear that $D_{p, q}x^k=[k]_{p, q}x^{k-1}$, and the $(p, q)$-Leibniz formula reads
\begin{equation}\label{pqlf} 
D_{p, q}^{n}(f(x)g(x))=\sum_{k=0}^{n}\begin{bmatrix}
n \\
k 
\end{bmatrix}_{p, q}D_{p, q}^{n-k}(f)(p^kx)D_{p, q}^{k}(g)(q^{n-k}x),
\end{equation}
where, $D_{p, q}^{n}f(x):=D_{p, q}(D_{p, q}^{n-1}f(x))$ for $n=1, 2, 3, \ldots$, and $D_{p, q}^{0}$ denotes the identity operator.\\ 
The operators $D_{p, q}, X$ and $N_{p}$ satisfy the following $(p, q)$-commutation relations:
\begin{equation}\label{cr1}
D_{p, q}X=qXD_ {p, q}+N_{p}, 
\end{equation}
\begin{equation}\label{cr2}
N_{p}X=pXN_{p},
\end{equation}
and 
\begin{equation}\label{cr3}
D_{p, q}N_{p}=pN_{p}D_{p, q}.
\end{equation}
The $(p, q)$-deformed exponential functions $E_{p, q}(x)$ and $e_{p, q}(x)$ can be  defined, respectively, as follows:
\begin{equation}\label{exp1}
E_{p, q}(x)=\sum_{n\geq 0}\frac{q^{\binom{n}{2}}}{[n]_{p, q}!}x^n,
\end{equation}
and 
\begin{equation}\label{exp2}
e_{p, q}(x)=\sum_{n\geq 0}\frac{p^{\binom{n}{2}}}{[n]_{p, q}!}x^n, 
\end{equation}
which satisfy the basic identity
\begin{equation}\label{expid}
e_{p, q}(x)E_{p, q}(-x)=1.
\end{equation}
The $n$th-$(p, q)$-derivative of the exponential function $e_{p, q}(x)$ satisfies the differential equation \cite{PNS}:
\begin{equation}\label{pqdex}
D_{p, q}^{n}e_{p, q}(x)=p^{\binom{n}{2}}e_{p, q}(p^nx).
\end{equation}
The $q$-settings (which correspond to $p=1$) of the above notions are the ones considered in \cite{TE2003, FHJ1909, VKPC2002, TMMS, LO}.\\
We conclude this section by recalling the following proposition from \cite{LO} which will be useful in the subsequent computations.
\begin{prop}[\cite{LO}]\label{propo}
Let $n$ be a nonnegative integer. Then the following relation holds true:
\begin{equation}\label{meq}
(XD_{p, q})^n=\sum_{k=0}^{n}S_{p, q}(n, k)X^kN_{p}^{n-k}D_{p, q}^k,
\end{equation}
where
\begin{equation}
S_{p, q}(n, k)=p^{n-k}q^{k-1}S_{p, q}(n-1, k-1)+[k]_{p, q}S_{p, q}(n-1, k),
\end{equation}
are the $(p, q)$-Stirling numbers of the second kind, with the initial value $S_{p, q}(0, 0)=1$, and consequently the numbers
\begin{equation}
\label{pqbell}
B_{n}(p, q)=\sum_{k=0}^{n}S_{p, q}(n, k),
\end{equation}
can be considered as the associated $(p, q)$-Bell numbers.
\end{prop}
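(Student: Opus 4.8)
The plan is to argue by induction on $n$, the base case $n=0$ being immediate since both sides reduce to the identity operator, using $S_{p, q}(0, 0)=1$ together with the conventions $N_{p}^{0}=D_{p, q}^{0}=I$. The entire argument rests on one auxiliary commutation identity that lets one push a $(p, q)$-derivative past a power of $X$, namely
\begin{equation*}
D_{p, q}X^{k}=q^{k}X^{k}D_{p, q}+[k]_{p, q}X^{k-1}N_{p}.
\end{equation*}
I would establish this first by a short secondary induction on $k$: the case $k=1$ is exactly \eqref{cr1}, and in the step from $k$ to $k+1$ one commutes the trailing factor $X$ through $D_{p, q}$ and $N_{p}$ using \eqref{cr1} and \eqref{cr2}. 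The two contributions to the $X^{k}N_{p}$-term then coalesce via the algebraic identity $q^{k}+p[k]_{p, q}=[k+1]_{p, q}$, which follows directly from the definition \eqref{pqnumber} of the $(p, q)$-number.

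For the inductive step of the main claim I would write $(XD_{p, q})^{n}=XD_{p, q}\,(XD_{p, q})^{n-1}$, substitute the induction hypothesis, and reduce matters to computing $XD_{p, q}\,X^{k}N_{p}^{n-1-k}D_{p, q}^{k}$ for each $k$. Applying the auxiliary identity to the factor $D_{p, q}X^{k}$ splits this expression into two pieces; in the first piece the surviving $D_{p, q}$ must still be moved past $N_{p}^{n-1-k}$, which is handled by iterating \eqref{cr3} to obtain $D_{p, q}N_{p}^{\,j}=p^{\,j}N_{p}^{\,j}D_{p, q}$. The net result is
\begin{equation*}
XD_{p, q}\,X^{k}N_{p}^{n-1-k}D_{p, q}^{k}=p^{n-1-k}q^{k}X^{k+1}N_{p}^{n-1-k}D_{p, q}^{k+1}+[k]_{p, q}X^{k}N_{p}^{n-k}D_{p, q}^{k}.
\end{equation*}

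Summing this against the coefficients $S_{p, q}(n-1, k)$ produces two sums; after the index shift $k\mapsto k-1$ in the first one, both are expressed in the common basis of operators $X^{k}N_{p}^{n-k}D_{p, q}^{k}$, and collecting the coefficient of each such operator gives exactly $p^{n-k}q^{k-1}S_{p, q}(n-1, k-1)+[k]_{p, q}S_{p, q}(n-1, k)$, i.e.\ the defining recurrence for $S_{p, q}(n, k)$. I expect the only delicate point to be the bookkeeping at the boundary indices $k=0$ and $k=n$: one must verify that the conventions $[0]_{p, q}=0$ and $S_{p, q}(n-1, -1)=S_{p, q}(n-1, n)=0$ make the two edge terms agree with the recurrence, yielding $S_{p, q}(n, 0)=0$ and $S_{p, q}(n, n)=q^{n-1}S_{p, q}(n-1, n-1)$. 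Once this is checked the induction closes, and the description of the Bell numbers in \eqref{pqbell} is then simply a definition requiring no further argument.
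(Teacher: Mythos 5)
Your proof is correct and follows essentially the same route the paper takes: the auxiliary identity $D_{p,q}X^{k}=q^{k}X^{k}D_{p,q}+[k]_{p,q}X^{k-1}N_{p}$ is exactly Corollary \ref{propdxn} (proved there by the same induction on $k$ via Proposition \ref{propuvw}), and your inductive step --- expanding $(XD_{p,q})^{n}$, pushing $D_{p,q}$ past $X^{k}$ and then past $N_{p}^{n-1-k}$ using \eqref{cr3}, and comparing coefficients --- is precisely the argument the paper uses to prove the general recurrence \eqref{recst}, of which this proposition is the special case $s=0$, $h=1$ as noted in the remark following it.
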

\section{Main result}\label{sec3}
\subsection{\texorpdfstring{The $(p, q)$-deformed generalized Stirling numbers of the second kind}{}}\label{subsec1}
In this subsection, we introduce a $(p, q)$-analogue of the generalized Stirling numbers of the second kind which were introduced by Mansour et al. in \cite{TMMSMS2011}.\\
For $h\in\mathbb{C}\setminus\{0\}$ and $s\in\mathbb{R}$, let $U, V$ and $W_{p}$ be noncommutative operators satisfy by the following $(p, q)$-commutation relations:
\begin{align}
UV-qVU&=hV^sW_{p},\label{eq1}\\
W_{p}V&=pVW_{p},\label{eq2}\\
UW_{p}&=pW_{p}U,\label{eq3}\\
W_{1}&=I \quad (\text{ identity operator}).\label{eq4}
\end{align}
The case $p=1$ and $s=0$, reduces to the $q$-commutation relation $UV-qVU=h$ of the $q$-deformed Heisenberg-Weyl algebra. Furthermore, $q=1$ yields the usual Heisenberg-Weyl algebra $([U, V]:=UV-VU=h)$. On the other hand, choosing $p=q=s=1$, one obtains the commutation relation $UV-VU=hV$ of the two-dimensional non-Abelian Lie algebra \cite{RAS1958, RMW1967, WW1974}, and $p=q=1$ and $s=2$ yield the Jordan quantum plane \cite{EEDYIMEEMDVZ1990, YIM1991}.
\begin{prop}\label{propuvw}
For all $k\in{\N}$, we have the following identity:
\begin{equation}\label{eq5}
UV^k-q^kV^kU=h[k]_{p, q}V^{s+k-1}W_{p},
\end{equation}
where $U, V$ and $W_{p}$ are noncommutative operators  satisfying Eqs. \eqref{eq1}-\eqref{eq4}.
\end{prop}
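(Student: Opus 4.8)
The plan is to prove \eqref{eq5} by induction on $k$, using only the commutation relations \eqref{eq1} and \eqref{eq2}; the remaining relations \eqref{eq3} and \eqref{eq4} are not needed at this stage (they will enter later when normal-ordering powers of $VU$). For the base case $k=1$, the right-hand side is $h[1]_{p,q}V^{s}W_{p}$, and since $[1]_{p,q}=\frac{p-q}{p-q}=1$ this is exactly the defining relation \eqref{eq1}.

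For the inductive step, suppose \eqref{eq5} holds for some $k\geq 1$. I would compute $UV^{k+1}=(UV^{k})V$ by first substituting the inductive hypothesis $UV^{k}=q^{k}V^{k}U+h[k]_{p,q}V^{s+k-1}W_{p}$, which gives $UV^{k+1}=q^{k}V^{k}(UV)+h[k]_{p,q}V^{s+k-1}(W_{p}V)$. The two parenthesised factors are then rewritten using the hypotheses: relation \eqref{eq1} turns $UV$ into $qVU+hV^{s}W_{p}$, and relation \eqref{eq2} turns $W_{p}V$ into $pVW_{p}$. After collecting the single term proportional to $V^{k+1}U$ and the two terms proportional to $V^{s+k}W_{p}$, I expect to arrive at
\[
UV^{k+1}-q^{k+1}V^{k+1}U=h\bigl(q^{k}+p[k]_{p,q}\bigr)V^{s+k}W_{p}.
\]

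The decisive step is then the scalar identity $q^{k}+p[k]_{p,q}=[k+1]_{p,q}$, which follows directly from the definition $[k]_{p,q}=\frac{p^{k}-q^{k}}{p-q}$ by the one-line computation $p(p^{k}-q^{k})+q^{k}(p-q)=p^{k+1}-q^{k+1}$. Substituting this into the displayed equation yields \eqref{eq5} with $k$ replaced by $k+1$, which closes the induction. I do not anticipate a genuine obstacle here: the only things to watch are the bookkeeping of the exponents of $V$ and the extra factor $p$ produced by \eqref{eq2}, together with making sure that the relevant recurrence for $[k]_{p,q}$ is the $p$-weighted one ($q^{k}+p[k]_{p,q}$) rather than the $q$-weighted one, which is precisely what the ordering of the operators $U$, $V$ and $W_{p}$ dictates.
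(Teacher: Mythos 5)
Your proposal is correct and follows essentially the same route as the paper: induction on $k$, expanding $UV^{k+1}=(UV^{k})V$ via the inductive hypothesis, then applying \eqref{eq1} to $UV$ and \eqref{eq2} to $W_{p}V$, and closing with the scalar identity $q^{k}+p[k]_{p,q}=[k+1]_{p,q}$. All steps, including the factor $p$ coming from \eqref{eq2} and the $p$-weighted recurrence for $[k]_{p,q}$, match the paper's argument.
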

\begin{proof}
The proof will be by induction on $k$. For $k=1$, Eq. \eqref{eq5} holds, since Eq. \eqref{eq1}. Assume that Eq. \eqref{eq5} holds for $k$ and let us prove it for $k+1$. Using induction hypothesis and Eqs. \eqref{eq1} and \eqref{eq2}, we obtain
\begin{align*}
UV^{k+1}&=q^kV^kUV+h[k]_{p, q}V^{s+k-1}W_{p}V\\
&=q^kV^k(qVU+hV^sW_{p})+h[k]_{p, q}V^{s+k-1}W_{p}V\\
&=q^{k+1}V^{k+1}U+hq^kV^{k+s}W_{p}+h[k]_{p, q}V^{s+k-1}W_{p}V\\
&=q^{k+1}V^{k+1}U+hq^kV^{k+s}W_{p}+hp[k]_{p, q}V^{s+k}W_{p}\\
&=q^{k+1}V^{k+1}U+h(q^k+p[k]_{p, q})V^{s+k}W_{p}\\
&=q^{k+1}V^{k+1}U+h[k+1]_{p, q}V^{s+k}W_{p},
\end{align*}
which completes the proof.
\end{proof}
\begin{cor}\label{propdxn}
One can reformulate Proposition \ref{propuvw} in terms of the operators $D_{p, q}, X$ and $N_{p}$ such that $U=D_{p, q}, V=X$ and $W_p=N_{p}$ (with $h=1, s=0$), as follows
\begin{equation}\label{propdxneq}
D_{p, q}X^k=q^kX^kD_{p, q}+[k]_{p, q}X^{k-1}N_{p}.
\end{equation}
\end{cor}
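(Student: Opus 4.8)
The plan is to obtain Corollary~\ref{propdxn} as a direct specialization of Proposition~\ref{propuvw}, which has already been established by induction. The identity \eqref{eq5} holds for the abstract noncommutative operators $U, V, W_p$ satisfying the commutation relations \eqref{eq1}--\eqref{eq4}, so the main task is simply to verify that the concrete operators $D_{p,q}$, $X$ and $N_p$ satisfy those same relations under the assignment $U=D_{p,q}$, $V=X$, $W_p=N_p$, together with the parameter choice $h=1$ and $s=0$.

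First I would check the three commutation relations against the $(p,q)$-commutation relations \eqref{cr1}--\eqref{cr3} recorded in the Preliminaries. Relation \eqref{eq1}, namely $UV-qVU=hV^sW_p$, becomes $D_{p,q}X-qXD_{p,q}=X^0 N_p=N_p$ after substituting $h=1$, $s=0$; this is precisely \eqref{cr1}. Relation \eqref{eq2}, $W_pV=pVW_p$, becomes $N_pX=pXN_p$, which is exactly \eqref{cr2}. Relation \eqref{eq3}, $UW_p=pW_pU$, becomes $D_{p,q}N_p=pN_pD_{p,q}$, which is exactly \eqref{cr3}. Finally, \eqref{eq4} requires $W_1=I$, and indeed $N_1 f(x)=f(1\cdot x)=f(x)$, so $N_1$ is the identity operator. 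Thus all four structural hypotheses are satisfied.

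With the hypotheses verified, I would substitute the same assignment into the conclusion \eqref{eq5}. The left-hand side $UV^k-q^kV^kU$ becomes $D_{p,q}X^k-q^kX^kD_{p,q}$, and the right-hand side $h[k]_{p,q}V^{s+k-1}W_p$ becomes $[k]_{p,q}X^{k-1}N_p$ once we set $h=1$ and $s=0$ so that $V^{s+k-1}=X^{k-1}$. This yields exactly the asserted identity
\begin{equation*}
D_{p,q}X^k=q^kX^kD_{p,q}+[k]_{p,q}X^{k-1}N_p,
\end{equation*}
completing the derivation.

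There is essentially no genuine obstacle here, since all the hard work resides in the induction of Proposition~\ref{propuvw}; the corollary is a reformulation. The only point demanding a moment of care is bookkeeping of the exponent: one must confirm that the substitution $s=0$ correctly collapses $V^{s+k-1}$ to $X^{k-1}$ rather than $X^{k}$, and that the $h=1$ specialization cleanly removes the scalar prefactor. Once these substitutions are tracked consistently, the statement follows immediately from the established proposition.
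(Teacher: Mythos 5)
Your proposal is correct and follows exactly the route the paper intends: the paper states this corollary without proof as an immediate specialization of Proposition \ref{propuvw}, relying on the fact that $D_{p,q}$, $X$, $N_p$ satisfy \eqref{cr1}--\eqref{cr3}, which is precisely the verification you carry out before substituting $h=1$, $s=0$ into \eqref{eq5}. Your write-up simply makes explicit the bookkeeping the paper leaves to the reader.
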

\begin{df}\label{defst}
Let $h\in{\C}\setminus\{0\}$ and $s\in{\R}$. For nonnegative integers $n, k$, we define the $(p, q)$-deformed generalized Stirling numbers of the second kind $\mathfrak{S}_{s; h}(n, k|p, q)$, as normal ordering coefficients of $(VU)^n$ by the following:
\begin{equation}\label{Gstr}
(VU)^n=\sum_{k=0}^{n}\mathfrak{S}_{s; h}(n, k|p, q)V^{s(n-k)+k}W_{p}^{n-k}U^k,
\end{equation}
with $U, V$ and $W_{p}$  satisfy Eqs. \eqref{eq1}-\eqref{eq4}. \\
Moreover, the corresponding $(p, q)$-deformed generalized Bell polynomials  and Bell numbers are defined, respectively, by 
\begin{equation}\label{gbp}
\mathfrak{B}_{s; h|n; p, q}(x):=\sum_{k=0}^{n}\mathfrak{S}_{s;  h}(n, k|p, q)x^k, 
\end{equation}
and 
\begin{equation}\label{gbn}
\mathfrak{B}_{s; h}(n|p, q):=\sum_{k=0}^{n}\mathfrak{S}_{s;  h}(n, k|p, q).
\end{equation}
\end{df}
\begin{exm} 
From Definition \ref{defst} and using Proposition \ref{propuvw} with the commutation relation \eqref{eq3} we can determine directly the first few instances of the $(p, q)$-deformed generalized Stirling numbers as follows:
\begin{description}
\item[$n=1$]
$\mathfrak{S}_{s; h}(1, 1|p, q)=1$ and $\mathfrak{B}_{s; h}(1|p, q)=1$.
\item[$n=2$]
$\mathfrak{S}_{s; h}(2, 1|p, q)=h, \quad \mathfrak{S}_{s; h}(2, 2|p, q)=q$, and consequently, $\mathfrak{B}_{s; h}(2|p, q)=h+q$.
\item[$n=3$]
$\mathfrak{S}_{s; h}(3, 1|p, q)=h^2[s+1]_{p, q},\hspace{0.3cm} \mathfrak{S}_{s; h}(3, 2|p, q)=hq\left([2]_{p, q}+pq^s\right), \hspace{0.3cm}\mathfrak{S}_{s; h}(3, 3|p, q)=q^3$,
and consequently, the associated Bell numbers 
$$\mathfrak{B}_{s; h}(3|p, q)=q^3+hq([2]_{p, q}+q^sp)+h^2[s+1]_{p, q}.$$
\end{description}
\end{exm}
\begin{thm}
The $(p, q)$-deformed generalized Stirling numbers $\mathfrak{S}_{s; h}(n, k|p, q)$ satisfy the following recursive formula: 
\begin{equation}\label{recst}
\mathfrak{S}_{s; h}(n+1, k|p, q)=p^{n-k+1}q^{s(n-k+1)+k-1}\mathfrak{S}_{s; h}(n, k-1|p, q)+h[s(n-k)+k]_{p, q}\mathfrak{S}_{s; h}(n, k|p, q),
\end{equation} 
for all $n\geq 0$ and $k\geq 1$, with the initial conditions $\mathfrak{S}_{s; h}(n, 0|p, q)=\delta_{n, 0}$ for all $n\geq 0$, and $\mathfrak{S}_{s; h}(0, k|p, q)=\delta_{0, k}$ for all $k\geq 0$.
\end{thm}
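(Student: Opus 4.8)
The plan is to derive the recurrence by substituting the defining normal-ordered expansion \eqref{Gstr} for $(VU)^n$ into the trivial identity $(VU)^{n+1}=VU\,(VU)^n$ and then re-ordering the outcome back into the canonical form prescribed by \eqref{Gstr} at level $n+1$. To keep the exponents manageable I would set $a_j:=s(n-j)+j$ and $b_j:=n-j$ for the $V$- and $W_p$-powers attached to the $j$-th summand, so that
\begin{equation*}
(VU)^{n+1}=\sum_{j=0}^{n}\mathfrak{S}_{s;h}(n,j|p,q)\,V\,U\,V^{a_j}W_p^{b_j}U^j,
\end{equation*}
and the whole problem collapses to normal-ordering the single monomial $V\,U\,V^{a_j}W_p^{b_j}U^j$.

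The central step is to push the isolated $U$ to the right past $V^{a_j}W_p^{b_j}$. First I would invoke Proposition \ref{propuvw} with $k=a_j$ to split $UV^{a_j}=q^{a_j}V^{a_j}U+h[a_j]_{p,q}V^{s+a_j-1}W_p$; then, in the first summand, I would commute the surviving $U$ through $W_p^{b_j}$ via the iterate $UW_p^{b_j}=p^{b_j}W_p^{b_j}U$ of \eqref{eq3}. Reattaching the leading $V$ gives the clean two-term expansion
\begin{equation*}
V\,U\,V^{a_j}W_p^{b_j}U^j=q^{a_j}p^{b_j}\,V^{a_j+1}W_p^{b_j}U^{j+1}+h[a_j]_{p,q}\,V^{s+a_j}W_p^{b_j+1}U^j.
\end{equation*}
A short exponent check confirms that the first monomial is exactly the canonical $(n+1)$-term of index $j+1$ (its $V$-, $W_p$-, and $U$-exponents equal $s((n+1)-(j+1))+(j+1)$, $(n+1)-(j+1)$, and $j+1$), while the second is the canonical $(n+1)$-term of index $j$.

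It then remains to collect, for fixed $k$, the coefficient of $V^{s((n+1)-k)+k}W_p^{(n+1)-k}U^k$. The first monomial contributes only when $j+1=k$, producing $q^{a_{k-1}}p^{b_{k-1}}=p^{n-k+1}q^{s(n-k+1)+k-1}$ times $\mathfrak{S}_{s;h}(n,k-1|p,q)$, while the second contributes only when $j=k$, producing $h[a_k]_{p,q}=h[s(n-k)+k]_{p,q}$ times $\mathfrak{S}_{s;h}(n,k|p,q)$. Adding these two contributions is precisely \eqref{recst}.

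The initial conditions are read off directly from \eqref{Gstr}: the empty product $(VU)^0=I$ forces $\mathfrak{S}_{s;h}(0,k|p,q)=\delta_{0,k}$, and a one-line induction built on the two-term expansion above (every summand keeps a positive power of $U$, as does the base case $(VU)^1=VU$) yields $\mathfrak{S}_{s;h}(n,0|p,q)=\delta_{n,0}$. I expect the only delicate point to be the bookkeeping of the three exponents under the competing index shifts $j\mapsto j+1$ and $j\mapsto j$; once the canonical form of each monomial is verified, comparison of coefficients is immediate and no genuine obstacle remains.
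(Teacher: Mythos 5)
Your proposal is correct and follows essentially the same route as the paper: expand $(VU)^{n+1}=VU\,(VU)^n$ via \eqref{Gstr}, push the isolated $U$ past $V^{s(n-j)+j}$ using Proposition \ref{propuvw} and then past $W_p^{n-j}$ using \eqref{eq3}, and compare coefficients of the canonical monomials at level $n+1$. Your exponent bookkeeping and the two collected contributions match the paper's computation exactly, and your explicit verification of the initial conditions is a small addition the paper leaves implicit.
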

\begin{proof}
By Definition \ref{defst}, one has
\begin{equation}\label{prdef}
(VU)^{n+1}=\sum_{k=0}^{n+1}\mathfrak{S}_{s; h}(n+1, k|p, q)V^{s(n-k+1)+k}W_{p}^{n-k+1}U ^k.
\end{equation}
On the other hand, 
\begin{align*}
(VU)^{n+1}&=(VU)(VU)^n\\
&=VU\sum_{k=0}^{n}\mathfrak{S}_{s; h}(n, k|p, q)V^{s(n-k)+k}W_{p}^{n-k}U^k\\
&=\sum_{k=0}^{n}\mathfrak{S}_{s; h}(n, k|p, q)V(UV^{s(n-k)+k})W_{p}^{n-k}U^k\\
&=\sum_{k=0}^{n}\mathfrak{S}_{s; h}(n, k|p, q)V\Big(q^{s(n-k)+k}V^{s(n-k)+k}U+h[s(n-k)+k]_{p, q}\\
 &\qquad\times V^{s(n-k)+k+s-1} W_{p}\Big)\times W_{p}^{n-k}U^k\\
&=\sum_{k=0}^{n}\mathfrak{S}_{s; h}(n, k|p, q)\Big(q^{s(n-k)+k}V^{s(n-k)+k+1}UW_{p}^{n-k}U^k+h[s(n-k)+k]_{p, q}\\
&\qquad\times V^{s(n-k)+k+s}W_{p}^{n-k+1}U^k\Big)\\
&=\sum_{k=0}^{n}\mathfrak{S}_{s; h}(n, k|p, q)\Big(p^{n-k}q^{s(n-k)+k}V^{s(n-k)+k+1}W_{p}^{n-k}U^{k+1}+h[s(n-k)+k]_{p, q}\\
&\qquad\times V^{s(n-k+1)+k}W_{p}^{n-k+1}U^k\Big),
\end{align*}
where in the fourth equality we use Eq. \eqref{eq5} and in the last equality we use Eq. \eqref{eq3}.\\
Comparing the coefficients with Eq. \eqref{prdef} finishes the proof.
\end{proof}
\begin{cor}
The $(p, q)$-deformed generalized Stirling numbers $\mathfrak{S}_{s; h}(n, k|p, q)$ satisfy the following identity:
$$\mathfrak{S}_{s; h}(n, k|p, q)=h^{n-k}\mathfrak{S}_{s; 1}(n, k|p, q).$$
\end{cor}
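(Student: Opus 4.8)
The statement is a homogeneity-in-$h$ property, and the natural tool is induction on $n$, driven entirely by the recurrence \eqref{recst} established in the preceding theorem. The plan is to fix the ansatz $\mathfrak{S}_{s; h}(n, k|p, q) = h^{n-k}\mathfrak{S}_{s; 1}(n, k|p, q)$ and verify that it is consistent both with the initial conditions and with the one-step recursion; since the recurrence together with the boundary data determines the numbers uniquely, this suffices.

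First I would dispose of the base case and the boundary. For $n=0$ the only nonzero value is $\mathfrak{S}_{s; h}(0,0|p,q)=1=h^{0}\mathfrak{S}_{s; 1}(0,0|p,q)$, and the initial conditions $\mathfrak{S}_{s; h}(n,0|p,q)=\delta_{n,0}$ and $\mathfrak{S}_{s; h}(0,k|p,q)=\delta_{0,k}$ are independent of $h$, so they are trivially reproduced by the factor $h^{n-k}$ on the support where $n=k=0$.

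Next, for the inductive step, I would assume the identity holds at level $n$ for all $k$ and substitute into \eqref{recst}. The point to make precise is the bookkeeping of the powers of $h$: in the first summand the substitution $\mathfrak{S}_{s; h}(n,k-1|p,q)=h^{n-(k-1)}\mathfrak{S}_{s; 1}(n,k-1|p,q)$ yields an overall factor $h^{n-k+1}$, while in the second summand the explicit prefactor $h$ multiplied by $\mathfrak{S}_{s; h}(n,k|p,q)=h^{n-k}\mathfrak{S}_{s; 1}(n,k|p,q)$ also yields $h^{n-k+1}$. Both terms thus carry the common factor $h^{(n+1)-k}$, which I can pull out; what remains inside the bracket is exactly the right-hand side of \eqref{recst} with $h=1$, hence equals $\mathfrak{S}_{s; 1}(n+1,k|p,q)$. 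This gives $\mathfrak{S}_{s; h}(n+1,k|p,q)=h^{(n+1)-k}\mathfrak{S}_{s; 1}(n+1,k|p,q)$ and closes the induction.

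The only subtlety, and the step I would be most careful about, is confirming that the two summands in \eqref{recst} share the \emph{same} power of $h$; this is precisely the structural feature that makes the claim work, since the $(p,q)$-number $[s(n-k)+k]_{p,q}$ carries no hidden dependence on $h$. Once that alignment is observed, no genuine computation is needed beyond matching exponents.
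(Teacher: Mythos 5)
Your proposal is correct and follows essentially the same route as the paper: induction on $n$ via the recurrence \eqref{recst}, with the key observation that both summands carry the common factor $h^{(n+1)-k}$ (you work forward from the recurrence for general $h$, the paper works backward from $h^{n-k}\mathfrak{S}_{s;1}(n,k|p,q)$, but the computation is identical). As a minor aside, your base case $\mathfrak{S}_{s;h}(0,0|p,q)=1$ is the correct value; the paper's proof misprints it as $0$.
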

\begin{proof}
The proof will be by induction on $n$. According to Eq. \eqref{Gstr}, we have:\\
For $n=0$, $\mathfrak{S}_{s; h}(0, 0|p, q)=0=\mathfrak{S}_{s; 1}(0, 0|p, q)$.\\
For $n=1$, $\mathfrak{S}_{s; h}(1, 0|p. q)=0=h\mathfrak{S}_{s; 1}(1, 0|p, q)$ and $\mathfrak{S}_{s; h}(1, 1|p, q)=1=\mathfrak{S}_{s; 1}(1, 1|p, q)$.\\
Now, using induction hypothesis and Eq. \eqref{recst}, one obtains
\begin{align*}
h^{n-k}\mathfrak{S}_{s; 1}(n, k|p, q)&=h^{n-k}\bigg(p^{n-k}q^{s(n-k)+k-1}\mathfrak{S}_{s; 1}(n-1, k-1|p, q)\\
&\qquad\qquad+[s(n-1-k)+k]_{p, q}\mathfrak{S}_{s; 1}(n-1, k|p, q)\bigg)\\
&=p^{n-k}q^{s(n-k)+k-1}h^{(n-1)-(k-1)}\mathfrak{S}_{s; 1}(n-1, k-1|p, q)\\
&\qquad\qquad+h\left([s(n-1-k)+k]_{p, q}h^{n-1-k}\mathfrak{S}_{s; 1}(n-1, k|p,  q)\right)\\
&=p^{n-k}q^{s(n-k)+k-1}\mathfrak{S}_{s; h}(n-1, k-1|p, q)\\
&\qquad\qquad+h[s(n-1-k)+k]_{p, q}\mathfrak{S}_{s; h}(n-1, k|p, q)\\
&=\mathfrak{S}_{s; h}(n, k|p, q).
\end{align*}
\end{proof}
\begin{rem}
For $s=0$ and $h=1$, the recurrence formula \eqref{recst} becomes
$$\mathfrak{S}_{0; 1}(n+1, k|p, q)=p^{n-k+1}q^{k-1}\mathfrak{S}_{0; 1}(n, k-1|p, q)+[k]_{p, q}\mathfrak{S}_{0; 1}(n, k|p, q),$$
which is the recurrence formula for the $(p, q)$-deformed Stirling numbers of the second kind $S_{p, q}(n, k)$, introduced  recently by the present author in \cite{LO}. Therefore,
$$\mathfrak{S}_{0; 1}(n, k|p, q)=S_{p, q}(n, k).$$
Moreover, for $p=1$ we obtain the well known recursive formula for the $q$-Stirling numbers of the second kind (see \cite{TMMSMS}). In addition, for $p=q=1$, the recurrence formula \eqref{recst} reduces to the one obtained by Mansour et al. for the generalized Stirling numbers of the second kind \cite{TMMS2011, TMMSMS2011}.
\end{rem}
\subsection{\texorpdfstring{The $(p, q)$-deformed generalized Touchard polynomials}{}}\label{subsec2}
In this subsection we establish a $(p, q)$-version of the generalized Touchard polynomials introduced in \cite{TMMS2013}, and some related properties. 
\begin{df}\label{defpqtch}
For $m\in{\R}$ $(n\in{\N}_{0})$, we define the $n$th $(p, q)$-deformed generalized Touchard polynomials for order $m$ by the formula
\begin{equation}\label{pqgtchrd}
T^{(m)}_{n; p, q}(x)=E_{p, q}(-p^{n}x)(X^mD_{p, q})^n e_{p, q}(x).
\end{equation} 
\end{df}
\begin{rem}
Obviously, if $p=1$, Eq. \eqref{pqgtchrd} reduces to the $q$-generalized Touchard polynomials of order $m$ \eqref{qgtchrd}. If further $q=p=1$, we obtain the classical generalized Touchard polynomials \eqref{ctchrd}.
\end{rem}
\begin{exm}
\begin{description}
\item[For $m=0$] $$T_{n, p; q}^{(0)}(x)=E_{p, q}(-x)D_{p, q}^{n}e_{p, q}(x)=p^{\binom{n}{2}} \text{ for all } n\in{\N}_{0}.$$
Note that as $p\rightarrow 1$, it approaches $q$-deformed case, where $T_{n; q}^{0}(x)=1$, for all $n\in{\N}_{0}$.
\item[For $m=1$]
\begin{align*}
T_{n; p, q}^{(1)}(x)&=E_{p, q}(-p^nx)(XD_{p, q})^ne_{p, q}(x)\\
&=E_{p, q}(-p^nx)\sum_{k=0}^{n}S_{p, q}(n, k)X^kN_{p}^{n-k}D_{p, q}^{k}e_{p, q}(x)\\
&=E_{p, q}(-p^nx)\sum_{k=0}^{n}p^{\binom{k}{2}}S_{p, q}(n, k)X^kN_{p}^{n-k}e_{p, q}(p^kx)\\
&=E_{p, q}(-p^nx)\sum_{k=0}^{n}\tilde{S}_{p, k}(n, k)x^ke_{p, q}(p^nx)\\
&=\sum_{k=1}^{n}\tilde{S}_{p, q}(n, k)x^k\\
&=\tilde{B}_{n}(p, q; x),
\end{align*}
where in the second equality we use Proposition \ref{propo},  and $\tilde{S}_{p, q}(n, k)=p^{\binom{k}{2}}S_{p, q}(n, k)$ are the other version  of $(p, q)$-deformed Stirling numbers of the second kind introduced in \cite{LO}, and $\tilde{B}_{n}(p, q; x)$ are the related $(p, q)$-Bell polynomials.
\end{description}
\end{exm}
\begin{prop}\label{prop311}
Let $m\in{\R}\setminus\{0\}$. Then, 
\begin{equation}\label{propo2}
D_{p, q}X^{m}=q^mX^mD_{p, q}+[m]_{p, q}X^{m-1}N_{p}.
\end{equation}
Moreover, 
\begin{equation}\label{prop21}
(X^mD_{p, q})^n=X^{n(m-1)}\sum_{k=0}^{n}\mathfrak{S}_{\frac{m-1}{m}; [m]_{p, q}}(n, k|p, q^m)X^kN_{p}^{n-k}D_{p, q}^{k}.
\end{equation}
\end{prop}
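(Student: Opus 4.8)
The plan is to treat the two assertions separately, establishing \eqref{propo2} first and then feeding it into the normal-ordering formula \eqref{prop21}.

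For \eqref{propo2} I would extend the integer identity \eqref{propdxneq} to a real exponent by testing both sides on an arbitrary function. Writing out $D_{p,q}(x^m f(x))$ directly from the definition of the $(p,q)$-derivative gives $x^{-1}(p-q)^{-1}\big(p^m x^m f(px) - q^m x^m f(qx)\big)$, while the right-hand side $q^m X^m D_{p,q}f + [m]_{p,q}X^{m-1}N_p f$, expanded with $[m]_{p,q} = (p^m-q^m)/(p-q)$ (which is meaningful for real $m$), collapses to the same expression once the two terms in $f(px)$ are combined. Since integrality of $m$ is never used, the identity holds for every $m \in \R\setminus\{0\}$; this is the only point where the passage from integer to real order is required.

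For \eqref{prop21} the natural route is to recognize $X^m D_{p,q}$ as an instance of the abstract product $VU$ of Definition \ref{defst}. I would put $V = X^m$, $U = D_{p,q}$ and $W_p = N_p$, and check that this triple realizes the defining relations \eqref{eq1}--\eqref{eq4} with parameters $s = \frac{m-1}{m}$, $h = [m]_{p,q}$, and deformation parameter $q$ replaced throughout by $q^m$. Relation \eqref{eq1} is exactly \eqref{propo2} rewritten as $UV - q^m VU = [m]_{p,q}\,V^{(m-1)/m}W_p$, using $V^{(m-1)/m} = X^{m-1}$; relations \eqref{eq2}, \eqref{eq3} are inherited from \eqref{cr2}, \eqref{cr3}, and \eqref{eq4} is immediate. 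Substituting this identification into \eqref{Gstr} gives
\begin{equation*}
(X^m D_{p,q})^n = \sum_{k=0}^n \mathfrak{S}_{\frac{m-1}{m};[m]_{p,q}}(n,k|p,q^m)\,(X^m)^{\frac{m-1}{m}(n-k)+k} N_p^{n-k} D_{p,q}^k ,
\end{equation*}
and the exponent simplifies via $m\big(\tfrac{m-1}{m}(n-k)+k\big) = n(m-1)+k$, so that $(X^m)^{\frac{m-1}{m}(n-k)+k} = X^{n(m-1)}X^k$; pulling the common factor $X^{n(m-1)}$ out of the sum yields \eqref{prop21}.

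The step demanding the most care is the verification that the triple $(X^m, D_{p,q}, N_p)$ is genuinely compatible with the common base $p$ used across \eqref{eq2}--\eqref{eq3}, since the various relations scale the base differently: \eqref{eq1} naturally carries $q^m$, whereas $N_p X^m = p^m X^m N_p$ produces $p^m$ in \eqref{eq2} while $D_{p,q}N_p = pN_pD_{p,q}$ keeps $p$ in \eqref{eq3}. Keeping the bookkeeping of these bases straight, together with the factorization $[mr]_{p,q} = [m]_{p,q}[r]_{p^m,q^m}$ needed to reconcile the $(p,q)$-numbers in the recurrence \eqref{recst} with the powers of $X$ that $D_{p,q}$ produces, is where the real work lies and is the most delicate part of the argument. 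If one prefers to bypass the abstract identification, the same formula can be obtained by induction on $n$, applying \eqref{propo2} to commute $D_{p,q}$ past the accumulated power $X^{n(m-1)+k}$ and \eqref{cr3} to commute it past $N_p^{n-k}$; matching the resulting coefficient recursion against \eqref{recst} is then the crux.
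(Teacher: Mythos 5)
Your argument is essentially the paper's own: \eqref{propo2} is proved by the first-order $(p,q)$-product rule (the paper cites the Leibniz formula \eqref{pqlf}, which amounts to the same computation you write out from the definition), and \eqref{prop21} is obtained exactly as you propose, by setting $U=D_{p,q}$, $V=X^m$, $W_p=N_p$ with $s=\tfrac{m-1}{m}$, $h=[m]_{p,q}$ and base $q^m$, and then reading off \eqref{Gstr}. The base mismatch you flag --- $N_pX^m=p^mX^mN_p$ versus $D_{p,q}N_p=pN_pD_{p,q}$, so that \eqref{eq2} and \eqref{eq3} cannot both hold with one common parameter $p$ --- is a genuine subtlety that the paper's proof passes over in silence when it writes the parameter slot as $(p,q^m)$, so on this point you are, if anything, more careful than the source.
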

\begin{proof}
To prove Eq. \eqref{propo2}, we use the $(p, q)$-Leibniz formula \eqref{pqlf}:
\begin{align*}
D_{p, q}(X^mf(x))&=D_{p, q}(x^mf(x))\\
&=[m]_{p, q}x^{m-1}f(px)+(qx)^mD_{p, q}f(x)\\
&=[m]_{p, q}x^{m-1}N_{p}f(x)+(qx)^mD_{p, q}f(x)\\
&=[m]_{p, q}X^{m-1}N_{p}f(x)+q^mX^mD_{p, q}f(x).
\end{align*}
On the other hand, by recalling $X^{m-1}=(X^m)^{\frac{m-1}{m}}$, we can rewrite Eq. \eqref{propo2} as follows
$$D_{p, q}X^m=q^mX^mD_{p, q}+[m]_{p, q}(X^m)^{\frac{m-1}{m}}N_{p}.$$
Hence, by considering $D_{p, q}=U, X^m = V, N_{p}= W_{p}$, we have the following commutation relation for variables $U, V$ and $W_p$
$$UV=q^mVU+[m]_{p, q}V^{\frac{m-1}{m}}W_{p}.$$
Choosing $s=\frac{m-1}{m}, h=[m]_{p, q}$ and using Eq. \eqref{Gstr}, we obtain
$$(VU)^n=\sum_{k=0}^{n}\mathfrak{S}_{\frac{m-1}{m}; [m]_{p, q}}(n, k|p, q^m)V^{\frac{m-1}{m}(n-k)+k}W_{p}^{n-k}U^k.$$
Therefore, 
$$(X^mD_{p, q})^n=X^{n(m-1)}\sum_{k=0}^{n}\mathfrak{S}_{\frac{m-1}{m}; [m]_{p, q}}(n, k|p, q^m)X^{k}N_{p}^{n-k}D_{p, q}^k.$$
\end{proof}
\begin{rem}
Since $\mathfrak{S}_{0; 1}(n, k|p, q)=S_{p, q}(n, k)$, the formula \eqref{prop21} reduces to \eqref{meq} for $m=1$.
\end{rem}
\begin{thm}\label{vv}
For $m\in{\R}\setminus\{0\}$, the $(p, q)$-deformed generalized Touchard polynomials of order $m$ are given by the following formula:
\begin{equation}\label{pqtf}
T_{n; p, q}^{(m)}(x)=x^{n(m-1)}\sum_{k=0}^{n}\tilde{\mathfrak{S}}_{\frac{m-1}{m}; [m]_{p, q}}(n, k|p, q^m)x^k=x^{n(m-1)}\tilde{\mathfrak{B}}_{\frac{m-1}{m}; [m]_{p, q}|n; p, q^m}(x),
\end{equation}
where $\tilde{\mathfrak{S}}_{s; h}(n, k|p, q)=p^{\binom{k}{2}}\mathfrak{S}_{s; h}(n, k|p, q)$ is one possible definition of the $(p, q)$-deformed generalized Stirling numbers with the associated $(p, q)$-deformed generalized Bell polynomials $\tilde{\mathfrak{B}}_{s; h| n; p, q}(x):=\sum\limits_{k=0}^{n}\tilde{\mathfrak{S}}_{s; h}(n, k|p, q)x^k$.
\end{thm}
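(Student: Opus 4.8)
The plan is to reduce everything to the normal-ordering formula \eqref{prop21} of Proposition \ref{prop311} and then let the resulting string of operators act on the $(p,q)$-exponential $e_{p,q}(x)$. First I would substitute \eqref{prop21} directly into the definition \eqref{pqgtchrd}, obtaining
$$T_{n;p,q}^{(m)}(x)=E_{p,q}(-p^nx)\,X^{n(m-1)}\sum_{k=0}^{n}\mathfrak{S}_{\frac{m-1}{m};[m]_{p,q}}(n,k|p,q^m)\,X^kN_p^{n-k}D_{p,q}^k\,e_{p,q}(x).$$
Since the operators act from the right, the innermost piece to evaluate is $D_{p,q}^k e_{p,q}(x)$, and the whole computation is simply to push this through $N_p^{n-k}$, $X^k$, $X^{n(m-1)}$ and finally the prefactor $E_{p,q}(-p^nx)$.

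Next I would evaluate each operator in turn. By the differential identity \eqref{pqdex} one has $D_{p,q}^k e_{p,q}(x)=p^{\binom{k}{2}}e_{p,q}(p^kx)$. Applying the Fibonacci operator $N_p$, which replaces $x$ by $px$, a total of $n-k$ times shifts the argument from $p^kx$ to $p^nx$, so that $N_p^{n-k}D_{p,q}^k e_{p,q}(x)=p^{\binom{k}{2}}e_{p,q}(p^nx)$. Then $X^k$ multiplies by $x^k$ and $X^{n(m-1)}$ multiplies by $x^{n(m-1)}$, neither of which affects the argument $p^nx$. At this stage every operator has acted and everything is multiplication by a function, so all factors commute freely; the key observation is that each summand now carries the \emph{same} exponential factor $e_{p,q}(p^nx)$, which can be pulled out of the sum.

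The last step combines this common factor with the prefactor through the basic identity \eqref{expid} in the form $E_{p,q}(-p^nx)\,e_{p,q}(p^nx)=1$. What remains is
$$T_{n;p,q}^{(m)}(x)=x^{n(m-1)}\sum_{k=0}^{n}p^{\binom{k}{2}}\mathfrak{S}_{\frac{m-1}{m};[m]_{p,q}}(n,k|p,q^m)\,x^k,$$
and identifying $p^{\binom{k}{2}}\mathfrak{S}_{s;h}(n,k|p,q)$ with $\tilde{\mathfrak{S}}_{s;h}(n,k|p,q)$ gives the claimed formula, with the rewriting in terms of $\tilde{\mathfrak{B}}$ being immediate from its definition.

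I do not expect a serious obstacle, since Proposition \ref{prop311} already absorbs the hard commutator bookkeeping. The one point requiring genuine care is matching the powers of $p$: one must check that the $n-k$ shifts produced by $N_p^{n-k}$ bring the argument to exactly $p^nx$, in agreement with the prefactor $E_{p,q}(-p^nx)$, so that the exponential identity \eqref{expid} applies uniformly across all $k$. This is precisely why the definition \eqref{pqgtchrd} is written with $E_{p,q}(-p^nx)$ rather than $E_{p,q}(-x)$; had these powers failed to align, the exponentials would not cancel summand by summand and no clean polynomial expression would emerge.
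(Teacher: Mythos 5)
Your proposal is correct and follows essentially the same route as the paper's own proof: substitute the normal-ordering formula \eqref{prop21} into Definition \ref{defpqtch}, evaluate $D_{p,q}^k e_{p,q}(x)=p^{\binom{k}{2}}e_{p,q}(p^kx)$ via \eqref{pqdex}, let $N_p^{n-k}$ shift the argument to $p^nx$, and cancel the common exponential factor against $E_{p,q}(-p^nx)$ using \eqref{expid}. Your closing remark about why the definition uses $E_{p,q}(-p^nx)$ rather than $E_{p,q}(-x)$ correctly identifies the one point where the powers of $p$ must align.
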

\begin{proof}
According to Definition \ref{defpqtch} and using Eqs. \eqref{expid}, \eqref{pqdex} and \eqref{prop21}, one obtains
\begin{align*}
T_{n; p, q}^{(m)}(x)&=E_{p, q}(-p^nx)(X^mD_{p, q})^ne_{p, q}(x)\\
&=E_{p, q}(-p^nx)X^{n(m-1)}\sum_{k=0}^{n}\mathfrak{S}_{\frac{m-1}{m}; [m]_{p, q}}(n, k|p, q^m)X^kN_{p}^{n-k}D_{p, q}^{k}e_{p, q}(x)\\
&=E_{p, q}(-p^nx)X^{n(m-1)}\sum_{k=0}^{n}\mathfrak{S}_{\frac{m-1}{m}; [m]_{p, q}}(n, k|p, q^m)X^kN_{p}^{n-k}p^{\binom{k}{2}}e_{p, q}(p^kx)\\
&=E_{p, q}(-p^nx)X^{n(m-1)}\sum_{k=0}^{n}p^{\binom{k}{2}}\mathfrak{S}_{\frac{m-1}{m}; [m]_{p, q}}(n, k|p, q^m)X^ke_{p, q}(p^nx)\\
&=x^{n(m-1)}\sum_{k=0}^{n}p^{\binom{k}{2}}\mathfrak{S}_{\frac{m-1}{m}; [m]_{p, q}}(n, k|p, q^m)x^k,
\end{align*}
which shows the assertion.
\end{proof}
In the next proposition, we establish a $(p, q)$-analogue of the recurrence relations \eqref{rctch} and \eqref{rqtch}   satisfied by the classical Touchard polynomials and by the $q$-deformed generalized Touchard polynomials, respectively. 
\begin{prop}
The $(p, q)$-deformed generalized Touchard polynomials $T_{n; pq}^{(m)}(x)$ satisfy the recursive formula:
\begin{equation}\label{pqrect}
T_{n+1; p, q}^{(m)}(x)=x^m\Big(N_{p}+E_{p, q}(-p^{n+1}x)e_{p, q}(p^nqx)D_{p, q}\Big)T_{n; p, q}^{(m)}(x).
\end{equation}
\end{prop}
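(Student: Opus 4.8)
The plan is to derive the recurrence \eqref{pqrect} from a single application of the $(p,q)$-Leibniz rule after stripping one copy of $X^mD_{p,q}$ off the defining formula \eqref{pqgtchrd}. First I would write $T_{n+1;p,q}^{(m)}(x)=E_{p,q}(-p^{n+1}x)\,X^mD_{p,q}\big[(X^mD_{p,q})^n e_{p,q}(x)\big]$. The observation that drives the whole computation is that, by Definition \ref{defpqtch} together with the identity \eqref{expid} (which gives $E_{p,q}(-p^nx)^{-1}=e_{p,q}(p^nx)$), one has $(X^mD_{p,q})^n e_{p,q}(x)=e_{p,q}(p^nx)\,T_{n;p,q}^{(m)}(x)$. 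Substituting this reduces the task to computing $D_{p,q}$ of the product $e_{p,q}(p^nx)\,T_{n;p,q}^{(m)}(x)$ and then multiplying by $x^mE_{p,q}(-p^{n+1}x)$.

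For the second step I would invoke \eqref{pqlf} at $n=1$ with the two factors interchanged, that is, in the form $D_{p,q}(fg)(x)=g(px)\,D_{p,q}f(x)+f(qx)\,D_{p,q}g(x)$, taking $f(x)=e_{p,q}(p^nx)$ and $g(x)=T_{n;p,q}^{(m)}(x)$. Interchanging the roles is the key decision: it is precisely what places the dilated exponential $f(qx)=e_{p,q}(p^nqx)$ alongside $D_{p,q}T_{n;p,q}^{(m)}$ and produces $g(px)=T_{n;p,q}^{(m)}(px)=N_pT_{n;p,q}^{(m)}(x)$ in the complementary term, matching the structure of \eqref{pqrect}. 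For that complementary term I would use the single-derivative instance of \eqref{pqdex}, rescaled in the argument to $D_{p,q}e_{p,q}(p^nx)=p^n e_{p,q}(p^{n+1}x)$, so that the term reads $T_{n;p,q}^{(m)}(px)\,p^n e_{p,q}(p^{n+1}x)=p^n\big(N_pT_{n;p,q}^{(m)}(x)\big)e_{p,q}(p^{n+1}x)$.

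The final step is cleanup: multiplying through by $x^mE_{p,q}(-p^{n+1}x)$ and applying \eqref{expid} once more in the form $E_{p,q}(-p^{n+1}x)e_{p,q}(p^{n+1}x)=1$. In the first term the exponential prefactors cancel completely and leave the polynomial contribution $x^mp^nN_pT_{n;p,q}^{(m)}(x)$, while the second term is carried along untouched as $x^mE_{p,q}(-p^{n+1}x)e_{p,q}(p^nqx)D_{p,q}T_{n;p,q}^{(m)}(x)$; adding the two gives the asserted recurrence.

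I do not anticipate a conceptual obstacle, only careful bookkeeping of the scalar dilation factors, and that is exactly where I would be most cautious. Two points deserve scrutiny: (i) one must take the correct side of the product rule so that the surviving coefficient of $D_{p,q}T_{n;p,q}^{(m)}$ is $e_{p,q}(p^nqx)$ rather than $e_{p,q}(p^{n+1}x)$; and (ii) the derivative $D_{p,q}e_{p,q}(p^nx)$ emits a scalar $p^n$ that is \emph{not} annihilated by the cancellation \eqref{expid}. Tracking (ii) honestly, my computation produces the operator $p^nN_p$ in the first slot; this agrees with the stated $N_p$ exactly when $p=1$, consistently with the reduction of \eqref{pqrect} to the $q$-recurrence \eqref{rqtch}. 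As a safeguard I would pin down the scalar in front of $N_p$ unambiguously by testing the outcome against the closed form of $T_{2;p,q}^{(m)}(x)$ computed directly from \eqref{prop21}; alternatively, grouping the Leibniz rule in the untwisted form of \eqref{pqlf} yields the equivalent expression $T_{n+1;p,q}^{(m)}=x^mD_{p,q}T_{n;p,q}^{(m)}+x^mp^nT_{n;p,q}^{(m)}(qx)$, whose agreement with the first form provides an independent check on the factor $p^n$.
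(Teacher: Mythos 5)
Your argument is the paper's own proof, step for step: strip one factor of $X^mD_{p,q}$, rewrite $(X^mD_{p,q})^ne_{p,q}(x)=e_{p,q}(p^nx)\,T_{n;p,q}^{(m)}(x)$ via \eqref{expid}, apply the $(p,q)$-Leibniz rule \eqref{pqlf} in the grouping that places $e_{p,q}(p^nqx)$ next to $D_{p,q}T_{n;p,q}^{(m)}$, and cancel the exponentials. The only divergence is precisely the point you flag under (ii), and there you are right and the printed proposition is not. Since $D_{p,q}e_{p,q}(p^nx)=p^n e_{p,q}(p^{n+1}x)$ (the dilation by $p^n$ emits a scalar $p^n$ on top of \eqref{pqdex}), the complementary Leibniz term is $p^n e_{p,q}(p^{n+1}x)\,T_{n;p,q}^{(m)}(px)$, so after multiplying by $x^mE_{p,q}(-p^{n+1}x)$ the first slot carries $p^nN_p$, not $N_p$. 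The paper's proof silently drops this factor when it writes
\begin{equation*}
D_{p,q}\Big(e_{p,q}(p^nx)T_{n;p,q}^{(m)}(x)\Big)=e_{p,q}(p^{n+1}x)T_{n;p,q}^{(m)}(px)+e_{p,q}(p^nqx)D_{p,q}T_{n;p,q}^{(m)}(x),
\end{equation*}
and the statement of \eqref{pqrect} inherits the omission.

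Your proposed consistency checks confirm the correction. For $m=0$ the paper's own example gives $T_{n;p,q}^{(0)}(x)=p^{\binom{n}{2}}$; the stated recurrence returns $N_p\,p^{\binom{n}{2}}=p^{\binom{n}{2}}$, whereas only $p^nN_p$ reproduces $p^{\binom{n+1}{2}}=p^n\,p^{\binom{n}{2}}$. For $m=1$, $n=1$, Theorem \ref{vv} gives $T_{2;p,q}^{(1)}(x)=x+pqx^2$, and the stated form would force $E_{p,q}(-p^2x)e_{p,q}(pqx)=1-p(1-q)x$, whose actual linear coefficient is $pq-p^2$, not $pq-p$; the version with $p^nN_p$ checks out exactly. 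So your proof is correct and the emendation $N_p\mapsto p^nN_p$ in \eqref{pqrect} (equivalently, your alternative form $T_{n+1;p,q}^{(m)}(x)=x^mD_{p,q}T_{n;p,q}^{(m)}(x)+p^nx^mT_{n;p,q}^{(m)}(qx)$) is the statement that actually holds; both collapse to \eqref{rqtch} at $p=1$ as you observe.
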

\begin{proof}
According to Definition \ref{defpqtch}, we have
$$T_{n+1; p, q}^{(m)}(x)=E_{p, q}(-p^{n+1}x)(X^mD_{p, q})(X^mD_{p, q})^ ne_{p, q}(x).$$
On the other hand, 
$$(X^mD_{p, q})^ne_{p, q}(x)=e_{p, q}(p^nx)T_{n; p, q}^{(m)}(x).$$
Then, 
\begin{align*}
T_{n+1; p, q}^{(m)}(x)&=E_{p, q}(-p^{n+1}x)X^m\Big(D_{p, q}\Big(e_{p, q}(p^nx)T_{n; p, q}^{(m)}(x)\Big)\Big)\\
&=E_{p, q}(-p^{n+1}x)X^m\Big(e_{p, q}(p^{n+1}x)T_{n; p, q}^{(m)}(px)+e_{p, q}(p^nqx)D_{p, q}T_{n; p, q}^{(m)}(x)\Big)\\
&=x^mT_{n; p, q}^{(m)}(px)+x^mE_{p, q}(-p^{n+1}x)e_{p, q}(p^nqx)D_{p, q}T_{n; p, q}^{(m)}(x)\\
&=x^m\Big(N_{p}+E_{p, q}(-p^{n+1}x)e_{p, q}(p^nqx)D_{p, q}\Big)T_{n; p, q}^{(m)}(x),
\end{align*}
where in the second equality, we use the $(p, q)$-Leibniz rule \eqref{pqlf}.
\end{proof}
\begin{rem}
Clearly, for $p=1$, the recurrence $\eqref{pqrect}$ reduces to \eqref{rqtch}, and for $p=q=1$, it reduces to \eqref{rctch}.
\end{rem}
Now we are in the position to derive a $(p, q)$-analogue of the generalized Spivey's formula.
\begin{thm}\label{mainthm}
Let $m\in\mathbb{R}\setminus \{0\}$. Then, we have the following $(p, q)$-deformed generalized Spivey's relation:
\begin{multline}\label{pqgsp}
\tilde{\mathfrak{B}}_{\frac{m-1}{m}; [m]_{p, q}|n
+l; p, q^m}(1)=\sum_{k=0}^{n}\sum_{j=0}^{l}\Bigg(\begin{bmatrix}
n \\
n-k 
\end{bmatrix}_{(p^{m-1}, q^{m-1}, h_{m, j+(m-1)l}(p, q))}p^{(m-1)\left((n-k)(1+k)+kl\right)} \\ \times q^{k(j+l(m-1))}\tilde{\mathfrak{S}}_{\frac{m-1}{m}; [m]_{p, q}}(l, j|p, q^m)\Bigg)\tilde{\mathfrak{B}}_{\frac{m-1}{m}; [m]_{p, q}|k;  p, q^m}(p^{n+l-k})
\end{multline}
\end{thm}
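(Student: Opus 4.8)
The plan is to derive the $(p,q)$-Spivey formula by normal ordering the operator $(X^m D_{p,q})^{n+l}$ in two different ways and comparing the resulting expressions, exactly paralleling the strategy that produced the classical relation \eqref{csp} and its $q$-analogue \eqref{qgsp}. The key observation is that, by Definition \ref{defpqtch} and formula \eqref{pqtf}, the quantity $\tilde{\mathfrak{B}}_{\frac{m-1}{m};[m]_{p,q}|n+l;p,q^m}(1)$ is, up to the prefactor $x^{(n+l)(m-1)}$, just the Touchard polynomial $T^{(m)}_{n+l;p,q}(x)$ evaluated at $x=1$. So I would first write $(X^mD_{p,q})^{n+l}=(X^mD_{p,q})^n(X^mD_{p,q})^l$ and normal-order the right-hand factor using \eqref{prop21}, obtaining a sum over $j$ of terms $\tilde{\mathfrak{S}}_{\frac{m-1}{m};[m]_{p,q}}(l,j|p,q^m)X^{l(m-1)+j}N_p^{l-j}D_{p,q}^j$.

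\textbf{Key steps.}
First I would apply $D_{p,q}^j$ to $e_{p,q}(x)$ via \eqref{pqdex}, producing the factor $p^{\binom{j}{2}}e_{p,q}(p^jx)$, and then let $N_p^{l-j}$ act to replace $x$ by $p^{l-j}x$. Second, and this is the crux, I would have to push the remaining operator $(X^mD_{p,q})^n$ through the already-produced powers of $X$ and the shifted exponential. The clean way is to use the recurrence \eqref{pqrect} or, more systematically, to move $(X^mD_{p,q})^n$ so that it acts on a shifted argument; the commutation of $D_{p,q}$ with the accumulated power $X^{l(m-1)+j}$ is governed by \eqref{propo2}, and each such commutation contributes powers of $p$ and $q$ together with a shift in the base point $j+(m-1)l$ of the underlying Stirling index. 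Third, I would recognize the emergent $(p,q)$-binomial-type weights $\begin{bmatrix} n \\ n-k \end{bmatrix}_{(p^{m-1},q^{m-1},h_{m,j+(m-1)l}(p,q))}$ as the deformation coming from the noncommutative Leibniz expansion \eqref{pqlf}, and collect the $q^{k(j+l(m-1))}$ factor and the composite power $p^{(m-1)((n-k)(1+k)+kl)}$ by carefully tracking every shift produced by $N_p$ and by each application of \eqref{propo2}. Finally, after re-summing the inner Touchard evaluation into $\tilde{\mathfrak{B}}_{\frac{m-1}{m};[m]_{p,q}|k;p,q^m}(p^{n+l-k})$, I would set $x=1$ and match coefficients to read off \eqref{pqgsp}.

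\textbf{Main obstacle.}
The hard part will be bookkeeping the accumulated $p$- and $q$-powers and, in particular, justifying that the twin-basic weights reorganize into exactly the $(p,q;h)$-binomial coefficient with the shifted parameter $h_{m,j+(m-1)l}(p,q)$. In the $q$-case of \eqref{qgsp} this is the role played by $h_{m,s}(q)=q^s[m-1]_q/[s]_q$, and the delicate point is that in the post-quantum setting the noncommutativity of $X$ and $D_{p,q}$ forces the shift base $j+(m-1)l$ to appear inside the deformation parameter rather than merely as an exponent; verifying that the factor $p^{(m-1)((n-k)(1+k)+kl)}$ is precisely what makes the two normal orderings agree is where the computation is most error-prone. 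I expect the cleanest route is an induction on $n$ (with $l$ fixed) using \eqref{pqrect}, so that each inductive step introduces exactly one factor $x^m(N_p+\cdots D_{p,q})$, letting me isolate and verify the growth of the $p$-exponent term by term rather than all at once.
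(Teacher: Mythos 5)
Your outline follows the paper's own route: the paper likewise writes $(X^mD_{p, q})^{n+l}=(X^mD_{p, q})^n(X^mD_{p, q})^l$, normal-orders the right-hand factor via \eqref{prop21}, pushes $(X^mD_{p, q})^n$ through the resulting power $X^{j+l(m-1)}$, applies everything to $e_{p, q}(x)$ using \eqref{pqdex}, converts back to Bell polynomials through Theorem \ref{vv}, and sets $x=1$ (this is packaged as Lemma \ref{mainlem2} followed by a short final argument). Your fallback suggestion of an induction on $n$ via \eqref{pqrect} is not what the paper does, but it is only a reorganization of the same computation.

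There is, however, one concrete gap, and it sits exactly at the step you identify as the crux. The weights $\begin{bmatrix} n \\ n-k \end{bmatrix}_{(p^{m-1}, q^{m-1}, h_{m, j+(m-1)l}(p, q))}$ do \emph{not} come from the $(p, q)$-Leibniz formula \eqref{pqlf}: that formula expands $D_{p, q}^n(fg)$ for a product of functions and plays no role here. The mechanism the paper uses (Lemma \ref{mainlem1}) is the operator identity $(X^mD_{p, q})^nX^s=X^s\left([s]_{p, q}X^{m-1}N_{p}+q^s(X^mD_{p, q})\right)^n$, obtained by iterating \eqref{propo2}, followed by the observation that the two summands $S=[s]_{p, q}X^{m-1}N_{p}$ and $R=q^s(X^mD_{p, q})$ satisfy $RS=h_{m, s}(p, q)S^2+\left(\frac{q}{p}\right)^{m-1}SR$ with $h_{m, s}(p, q)=\frac{q^s[m-1]_{p, q}}{[s]_{p, q}p^{m-1}}$. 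This is precisely the hypothesis of Benaoum's $(q, h)$-binomial theorem, which, after introducing a $(p, q, h)$-binomial coefficient, expands $(S+R)^n$ and delivers the shifted parameter $s=j+l(m-1)$ inside the deformation automatically. Without identifying this pair $(S, R)$ and its commutation relation, the ``bookkeeping'' you defer cannot be closed by term-by-term commutation alone. Also note that the exponent $p^{(m-1)\left((n-k)(1+k)+kl\right)}$ is not produced during the normal ordering itself: Lemma \ref{mainlem2} only yields $p^{(n-k)(m-1)}$, and the rest, $p^{(m-1)k(n-k+l)}$, appears at the very end when the order-$k$ Touchard polynomial evaluated at $p^{n-k+l}x$ is rewritten as $\left(p^{n-k+l}x\right)^{k(m-1)}\tilde{\mathfrak{B}}_{\frac{m-1}{m}; [m]_{p, q}|k; p, q^m}(p^{n+l-k}x)$ via Theorem \ref{vv}. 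Your proposal should make both of these mechanisms explicit before the coefficient matching can be carried out.
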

Let us first prove the following two lemmas which will be the main ingredients for the proof of Theorem \ref{mainthm}.
\begin{lemma}\label{mainlem1}
For any positive integer $n, s\in\mathbb{R}$ and $m\in\mathbb{R}\setminus \{0\}$, we have
\begin{align}
(X^mD_{p, q})^nX^s&=X^s\left([s]_{p, q}X^{m-1}N_{p}+q^s(X^mD_{p, q})\right)^n \label{eq01}\\
&=\sum_{k=0}^{n}\begin{bmatrix}
n \\
n-k 
\end{bmatrix}_{(p^{m-1}, q^{m-1}, h_{m,s}(p, q))}[s]_{p, q}^{n-k}X^s(X^{m-1}N_{p})^{n-k}q^{ks}(X^mD_{p, q})^k, \label{eq02}
\end{align}
where $h_{m, s}(p, q):=\frac{q^s[m-1]_{p, q}}{[s]_{p, q}p^{m-1}}$.
\end{lemma}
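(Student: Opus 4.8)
The plan is to prove the two displayed equalities separately: the first, (\ref{eq01}), by a direct induction that slides each factor of $X^mD_{p,q}$ to the right of $X^s$, and the second, (\ref{eq02}), by expanding the resulting $n$th power through a noncommutative binomial theorem.

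For (\ref{eq01}) the key ingredient is a single-step shift relation. First I would record the real-order power rule $D_{p,q}X^s=q^sX^sD_{p,q}+[s]_{p,q}X^{s-1}N_p$, which follows from the $(p,q)$-Leibniz formula (\ref{pqlf}) exactly as in the proof of Proposition \ref{prop311}, with the exponent $m$ there replaced by the real number $s$. Multiplying on the left by $X^m$ and using that powers of $X$ commute gives
\[
(X^mD_{p,q})X^s=X^s\bigl([s]_{p,q}X^{m-1}N_p+q^s(X^mD_{p,q})\bigr).
\]
Writing $B:=X^mD_{p,q}$ and $A:=[s]_{p,q}X^{m-1}N_p+q^s(X^mD_{p,q})$, this reads $BX^s=X^sA$, and (\ref{eq01}) becomes the assertion $B^nX^s=X^sA^n$. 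I would prove this by induction on $n$: the base case $n=1$ is the displayed identity, and the inductive step is $B^{n+1}X^s=B(B^nX^s)=B(X^sA^n)=(BX^s)A^n=X^sA^{n+1}$, peeling one copy of $B$ off the left at a time.

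For (\ref{eq02}) it then remains to put $A^n=\bigl([s]_{p,q}X^{m-1}N_p+q^s(X^mD_{p,q})\bigr)^n$ into the normal-ordered form $\sum_k c_{n,k}\,P^{n-k}Q^k$, where $P:=[s]_{p,q}X^{m-1}N_p$ and $Q:=q^s(X^mD_{p,q})$. The crucial input is the commutation relation between $P$ and $Q$: applying the power rule to $X^{m-1}$ together with $N_pX^m=p^mX^mN_p$ and $D_{p,q}N_p=pN_pD_{p,q}$ (from (\ref{cr2}) and (\ref{cr3})), and using $P^2=[s]_{p,q}^2p^{m-1}X^{2m-2}N_p^2$, a direct computation yields
\[
QP=p^{1-m}q^{m-1}\,PQ+h_{m,s}(p,q)\,P^2,
\]
a relation of the form $QP=aPQ+bP^2$ with scalars $a:=p^{1-m}q^{m-1}=q^{m-1}/p^{m-1}$ and $b:=h_{m,s}(p,q)$. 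From this I would derive by induction the sliding rule $QP^j=a^jP^jQ+b\,\tfrac{[j]_{p^{m-1},q^{m-1}}}{p^{(m-1)(j-1)}}P^{j+1}$, and then establish the noncommutative binomial identity $A^n=\sum_{k=0}^n\begin{bmatrix}n\\n-k\end{bmatrix}_{(p^{m-1},q^{m-1},h_{m,s}(p,q))}P^{n-k}Q^k$ by induction on $n$, checking that the scalar recurrence produced when commuting $Q$ past $P^{n-k}$ coincides with the defining recurrence of the $(p^{m-1},q^{m-1},h_{m,s})$-binomial coefficients. Substituting $P^{n-k}=[s]_{p,q}^{n-k}(X^{m-1}N_p)^{n-k}$ and $Q^k=q^{ks}(X^mD_{p,q})^k$, and prepending the $X^s$ supplied by (\ref{eq01}), then produces (\ref{eq02}).

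The two inductions are routine; the main obstacle is the exact bookkeeping in the commutation step. Getting the scalars $a=p^{1-m}q^{m-1}$ and $b=h_{m,s}(p,q)$ right depends on the precise placement of the powers $p^m$ and $p^{m-1}$ coming from (\ref{cr2}) and from the evaluation of $P^2$, and then one must verify that the coefficient recurrence satisfied by $c_{n,k}$ matches the $(p^{m-1},q^{m-1},h_{m,s})$-binomial recurrence. Reconciling these two recurrences is the delicate point where the calculation has to be carried out carefully.
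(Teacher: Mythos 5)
Your argument is correct and matches the paper's proof in all essentials: the same one-step shift relation $(X^mD_{p,q})X^s=X^s\bigl([s]_{p,q}X^{m-1}N_p+q^s X^mD_{p,q}\bigr)$ followed by induction on $n$ for \eqref{eq01}, and the same commutation relation $QP=(q/p)^{m-1}PQ+h_{m,s}(p,q)P^2$ (with the same bookkeeping of $p^m$ from $N_pX^m=p^mX^mN_p$ and $p^{m-1}$ from $P^2$) for \eqref{eq02}. The only cosmetic difference is that you re-derive the noncommutative binomial expansion by an explicit induction, whereas the paper invokes Benaoum's $(q,h)$-binomial formula with deformation parameter $(q/p)^{m-1}$ and then rewrites the coefficients as $(p^{m-1},q^{m-1},h_{m,s})$-binomial coefficients.
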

\begin{proof}
To prove Eq. \eqref{eq01}, we use induction on $n$. For $n=1$ and by virtue of Proposition \ref{prop311}, Eq. \eqref{propo2}, one obtains
\begin{align*}
(X^mD_{p, q})X^s&=X^m\left([s]_{p, q}X^{s-1}N_{p}+q^s(X^sD_{p, q})\right)\\
&=X^s\left([s]_{p, q}X^{m-1}N_{p}+q^s(X^mD_{p, q})\right).
\end{align*}
Moreover, 
\begin{align*}
(X^mD_{p, q})^nX^s&=(X^mD_{p, q})(X^mD_{p, q})^{n-1}X^s\\
&=(X^mD_{p, q})\bigg(X^s\Big([s]_{p, q}X^{m-1}N_{p}+q^s(X^mD_{p, q})\Big)^{n-1}\bigg)\\
&=X^s\big([s]_{p, q}X^{m-1}N_{p}+q^s(X^mD_{p, q})\big)\big([s]_{p, q}X^{m-1}N_{p}+q^s(X^mD_{p, q})\big)^{n-1}\\
&=X^s\big([s]_{p, q}X^{m-1}N_{p}+q^s(X^mD_{p, q})\big)^n.
\end{align*}
For the second Eq. \eqref{eq02} we can use the $(q, h)$-binomial formula due to Benaoum \cite{HBB}. Let us recall the $(q, h)$-binomial formula from \cite{HBB}; for two variables $R$ and $S$ which satisfy $RS-qSR=hS^2$, one has
$$(R+S)^n=\sum_{k=0}^{n}\begin{bmatrix}
n \\
n-k 
\end{bmatrix}_{(q, h)}S^{n-k}R^k,$$
where $\begin{bmatrix}
n \\
n-k 
\end{bmatrix}_{(q, h)}:=\begin{bmatrix}
n \\
n-k 
\end{bmatrix}_{q}\prod_{j=0}^{k-1}(1+h[j]_{q})$ is the $(q, h)$-binomial coefficients. \\
Consider $S=[s]_{p, q}X^{m-1}N_{p}$ and $R=q^s(X^mD_{p, q})$, then 
\begin{align*}
(RS)f(x)&=q^s[s]_{p, q}X^mD_{p, q}X^{m-1}N_{p}f(x)\\
&=q^s[s]_{p, q}X^mD_{p, q}X^{m-1}f(px)\\
&=q^s[s]_{p, q}[m-1]_{p, q}X^mX^{m-2}f(p^2x)+q^s[s]_{p, q}q^{m-1}X^mX^{m-1}D_{p, q}f(px)\\
&=q^s[s]_{p, q}[m-1]_{p, q}X^{2m-2}N_{p}^{2}f(x)+q^s[s]_{p, q}q^{m-1}pX^mX^{m-1}N_{p}D_{p, q}f(x)\\
&=\frac{q^s[m-1]_{p, q}}{[s]_{p, q}p^{m-1}}S^2f(x)+\left(\frac{q}{p}\right)^{m-1}(SR)f(x).
\end{align*}
Therefore, 
$$RS=h_{m, s}(p, q)S^2+\left(\frac{q}{p}\right)^{m-1}SR.$$
Let us introduce the $(p,q, h)$-binomial coefficients $\begin{bmatrix}
n \\
n-k 
\end{bmatrix}_{(p, q, h)}$ as follows:
$$\begin{bmatrix}
n \\
n-k 
\end{bmatrix}_{(p, q, h)}:=\begin{bmatrix}
n \\
n-k 
\end{bmatrix}_{(\frac{q}{p}, h)}=p^{k(k-1)}\begin{bmatrix}
n \\
n-k 
\end{bmatrix}_{p, q}\prod_{j=0}^{k-1}(1+hp^{1-n}[j]_{p, q}).$$
Clearly, for $p=1, \begin{bmatrix}
n \\
n-k 
\end{bmatrix}_{(1, q, h)}=\begin{bmatrix}
n \\
n-k 
\end{bmatrix}_{(q, h)}.$\\
Thus,
\begin{align*}
\Big([s]_{p, q}X^{m-1}N_{p}+q^s(X^mD_{p, q})\Big)^n &=\sum_{k=0}^{n}\begin{bmatrix}
n \\
n-k 
\end{bmatrix}_{(p^{m-1}, q^{m-1}, h_{m, s}(p, q))}[s]_{p, q}^{n-k}(X^{m-1}N_{p})^{n-k}\\
& \hspace{2cm} \times q^{sk}(X^mD_{p, q})^k, 
\end{align*}
and this proves Eq. \eqref{eq02}.
\end{proof}
\begin{lemma}\label{mainlem2}
The $(p, q)$-generalized Touchard polynomials of order $m\in\mathbb{R}\setminus \{0\}$ satisfy the recursive formula
\begin{multline}\label{lemtchrd}
T_{n+l, p, q}^{(m)}(x)=\sum_{k=0}^{n}\sum_{j=0}^{l}\begin{bmatrix}
n \\
n-k 
\end{bmatrix}_{(p^{m-1}, q^{m-1}, h_{m, s}(p, q))}q^{k(j+(m-1)l)}p^{(m-k)(m-1)}\Bigl[j+l(m-1)\Bigr]_{p, q}^{n-k}\\ \times \tilde{\mathfrak{S}}_{\frac{m-1}{m}; [m]_{p, q}}(l, j|p, q^m)x^{j+(m-1)(l+n-k)}T_{k, p, q}^{(m)}(p^{n-k+l}x).
\end{multline}
\end{lemma}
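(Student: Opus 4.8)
The plan is to compute $(X^mD_{p,q})^{n+l}e_{p,q}(x)$ in two stages and then strip off the exponential prefactor coming from Definition \ref{defpqtch}. Starting from $T_{n+l,p,q}^{(m)}(x)=E_{p,q}(-p^{n+l}x)(X^mD_{p,q})^{n+l}e_{p,q}(x)$, I would split $(X^mD_{p,q})^{n+l}=(X^mD_{p,q})^n(X^mD_{p,q})^l$ and apply Eq. \eqref{prop21} of Proposition \ref{prop311} to the second block. This rewrites $(X^mD_{p,q})^l=X^{l(m-1)}\sum_{j=0}^l\mathfrak{S}_{\frac{m-1}{m};[m]_{p,q}}(l,j|p,q^m)X^jN_p^{\,l-j}D_{p,q}^{\,j}$, so that, on setting $s:=j+l(m-1)$, the whole expression becomes a Stirling-weighted sum of operator strings $(X^mD_{p,q})^nX^{s}N_p^{\,l-j}D_{p,q}^{\,j}$.

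Next I would push $(X^mD_{p,q})^n$ through $X^s$ by invoking Lemma \ref{mainlem1}, Eq. \eqref{eq02}. This is precisely the step that delivers the $(p^{m-1},q^{m-1},h_{m,s}(p,q))$-binomial coefficient, the factor $[s]_{p,q}^{\,n-k}=[j+l(m-1)]_{p,q}^{\,n-k}$, the scalar $q^{ks}=q^{k(j+(m-1)l)}$, and the operator $X^{s}(X^{m-1}N_p)^{n-k}(X^mD_{p,q})^k$ for $0\le k\le n$. At this point the combinatorial skeleton of \eqref{lemtchrd} is already in place; what remains is to let the operators act on $e_{p,q}(x)$.

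The third stage evaluates the string from right to left. I would use $D_{p,q}^{\,j}e_{p,q}(x)=p^{\binom{j}{2}}e_{p,q}(p^jx)$ from Eq. \eqref{pqdex}, the key commutation $(X^mD_{p,q})N_p=p^{1-m}N_p(X^mD_{p,q})$ obtained from \eqref{cr2}--\eqref{cr3} together with $N_p^{a}f(x)=f(p^ax)$, and the basic identity $(X^mD_{p,q})^ke_{p,q}(x)=e_{p,q}(p^kx)T_{k,p,q}^{(m)}(x)$ (immediate from Definition \ref{defpqtch} and \eqref{expid}). These reduce $(X^mD_{p,q})^kN_p^{\,l-j}D_{p,q}^{\,j}e_{p,q}(x)$ to a scalar times $e_{p,q}(p^{k+l}x)T_{k,p,q}^{(m)}(p^lx)$. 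Then $(X^{m-1}N_p)^{n-k}=p^{(m-1)\binom{n-k}{2}}X^{(n-k)(m-1)}N_p^{\,n-k}$ shifts arguments by $p^{n-k}$, turning $T_{k,p,q}^{(m)}(p^lx)$ into $T_{k,p,q}^{(m)}(p^{n-k+l}x)$ and the exponential into $e_{p,q}(p^{n+l}x)$, while $X^{s}$ contributes $x^{\,j+(m-1)(l+n-k)}$. Multiplying by $E_{p,q}(-p^{n+l}x)$ and using $E_{p,q}(-p^{n+l}x)e_{p,q}(p^{n+l}x)=1$ from \eqref{expid} removes the exponentials; finally, absorbing $p^{\binom{j}{2}}$ into the Stirling number via $\tilde{\mathfrak{S}}_{s;h}(n,k|p,q)=p^{\binom{k}{2}}\mathfrak{S}_{s;h}(n,k|p,q)$ produces $\tilde{\mathfrak{S}}_{\frac{m-1}{m};[m]_{p,q}}(l,j|p,q^m)$ and yields the double sum in \eqref{lemtchrd}.

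I expect the only genuine obstacle to be bookkeeping: accumulating every power of $p$ correctly. These arise from four independent sources, namely the commutation $(X^mD_{p,q})N_p=p^{1-m}N_p(X^mD_{p,q})$, the evaluation $D_{p,q}^{\,j}e_{p,q}=p^{\binom{j}{2}}e_{p,q}(p^{\,j}\cdot)$, the collapse $(X^{m-1}N_p)^{r}=p^{(m-1)\binom{r}{2}}X^{r(m-1)}N_p^{r}$, and the argument shifts induced by the remaining $N_p$ powers; they must be combined with care to collapse into the single $p$-prefactor displayed in \eqref{lemtchrd}. By contrast, the $q$-powers, the $(p^{m-1},q^{m-1},h_{m,s})$-binomial coefficient, and the Stirling structure are handed over directly by Lemma \ref{mainlem1} and Eq. \eqref{prop21}, so those require no additional manipulation.
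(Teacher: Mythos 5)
Your proposal follows the paper's proof essentially step for step: the same splitting $(X^mD_{p,q})^{n+l}=(X^mD_{p,q})^n(X^mD_{p,q})^l$, normal ordering of the second factor via Eq.~\eqref{prop21}, pushing $(X^mD_{p,q})^n$ through $X^{j+l(m-1)}$ via Lemma~\ref{mainlem1} with $s=j+l(m-1)$, evaluation on $e_{p,q}(x)$ using \eqref{pqdex} and the identity $(X^mD_{p,q})^ke_{p,q}=e_{p,q}(p^k\cdot)T^{(m)}_{k,p,q}$, and removal of the exponentials with \eqref{expid}. The one step you defer, collecting the powers of $p$, is precisely where the printed argument is least tight (the paper's own computation yields $p^{(n-k)(m-1)}$ while the lemma displays $p^{(m-k)(m-1)}$), so carrying out your explicit accounting via $(X^{m-1}N_p)^{r}=p^{(m-1)\binom{r}{2}}X^{r(m-1)}N_p^{r}$ and the argument shifts would be worthwhile rather than optional.
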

\begin{proof}
We have 
\begin{align*}
\left(X^mD_{p, q}\right)^{n+l}&=\left(X^mD_{p, q}\right)^n\left(X^mD_{p, q}\right)^l\\
&=\left(X^mD_{p, q}\right)^nX^{l(m-1)}\sum_{j=0}^{l}\mathfrak{S}_{\frac{m-1}{m}; [m]_{p, q}}(l, j|p, q^m)X^jN_{p}^{l-j}D_{p, q}^{j}\\
&=\sum_{k=0}^{n}\sum_{j=0}^{l}\begin{bmatrix}
n \\
n-k 
\end{bmatrix}_{(p^{m-1}, q^{m-1}, h_{m, s}(p, q))}[j+l(m-1)]_{p, q}q^{k(j+l(m-1))}\\
&\qquad\quad \times \mathfrak{S}_{\frac{m-1}{m}; [m]_{p, q}}(l, j|p, q^m)X^{j+l(m-1)}(X^{m-1}N_{p})^{n-k}(X^mD_{p, q})^kN_{p}^{l-j}D_{p, q}^{j}\\
&=\sum_{k=0}^{n}\sum_{j=0}^{l}\begin{bmatrix}
n \\
n-k 
\end{bmatrix}_{(p^{m-1}, q^{m-1}, h_{m, s}(p, q))}p^{(n-k)(m-1)}[j+l(m-1)]_{p, q}q^{k(j+l(m-1))}\\
&\qquad\quad \times \mathfrak{S}_{\frac{m-1}{m}; [m]_{p, q}}(l, j|p, q^m)X^{j+(l+n-k)(m-1)}N_{p}^{n-k}(X^mD_{p, q})^kN_{p}^{l-j}D_{p, q}^{j}, 
\end{align*}
where in the second equality, we use Proposition \ref{prop311}, in the third equality, we use Lemma \ref{mainlem1} with $s=j+l(m-1)$, and in the last equality we use \eqref{cr2}.\\
Applying the above identity to the $(p, q)$-exponential function $e_{p, q}(x)$, yields 
\begin{align*}
(X^mD_{p, q})^{l+n}e_{p, q}(x)&=\sum_{k=0}^{n}\sum_{j=0}^{l}\begin{bmatrix}
n \\
n-k 
\end{bmatrix}_{(p^{m-1}, q^{m-1}, h_{m, s}(p, q))}p^{(n-k)(m-1)+\binom{j}{2}}\mathfrak{S}_{\frac{m-1}{m}; [m]_{p, q}}(l, j|p, q^m)\\
&\quad\times [j+l(m-1)]_{p, q} q^{k(j+l(m-1))}X^{j+(l+n-k)(m-1)}N_{p}^{n-k}(X^mD_{p, q})^ke_{p, q}(p^lx)\\
&=\sum_{k=0}^{n}\sum_{j=0}^{l}\begin{bmatrix}
n \\
n-k 
\end{bmatrix}_{(p^{m-1}, q^{m-1}, h_{m, s}(p, q))}p^{(n-k)(m-1)+\binom{j}{2}}\mathfrak{S}_{\frac{m-1}{m}; [m]_{p, q}}(l, j|p, q^m)\\
&\quad \times [j+l(m-1)]_{p, q} q^{k(j+l(m-1))} X^{j+(l+n-k)(m-1)}N_{p}^{n-k}\frac{T_{k, p, q}^{(m)}(p^lx)}{E_{p, q}(-p^{k+l}x)}\\
&=\sum_{k=0}^{n}\sum_{j=0}^{l}\begin{bmatrix}
n \\
n-k 
\end{bmatrix}_{(p^{m-1}, q^{m-1}, h_{m, s}(p, q))}p^{(n-k)(m-1)+\binom{j}{2}}\mathfrak{S}_{\frac{m-1}{m}; [m]_{p, q}}(l, j|p, q^m)\\
&\quad \times [j+l(m-1)]_{p, q} q^{k(j+l(m-1))} x^{j+(l+n-k)(m-1)}\frac{T_{k, p, q}^{(m)}(p^{n-k+l}x)}{E_{p, q}(-p^{n+l}x)}.
\end{align*}
Multiplying both sides of the above identity by $E_{p, q}(-p^{n+l}x)$ gives
\begin{multline*}
T_{n+l, p, q}^{(m)}(x)=\sum_{k=0}^{n}\sum_{j=0}^{l}\begin{bmatrix}
n \\
n-k 
\end{bmatrix}_{(p^{m-1}, q^{m-1}, h_{m, s}(p, q))}p^{(n-k)(m-1)}[j+l(m-1)]_{p, q}q^{k(j+l(m-1))}\\ \times \tilde{\mathfrak{S}}_{\frac{m-1}{m}; [m]_{p, q}}(l, j|p, q^m)x^{j+(l+n-k)(m-1)}T_{k, p, q}^{(m)}(p^{n-k+l}x).
\end{multline*}
\end{proof}
\begin{proof}[Proof of Theorem \ref{mainthm}]{\ \\}
By virtue of Lemma \ref{mainlem2} and Theorem \ref{vv}, one obtains the recurrence relation for $(p, q)$-deformed generalized Bell polynomials as follows
\begin{multline}
\tilde{\mathfrak{B}}_{\frac{m-1}{m}; [m]_{p, q}|n+l; p, q^m}(x)=\sum_{k=0}^{n}\sum_{j=0}^{l}\begin{bmatrix}
n \\
n-k 
\end{bmatrix}_{(p^{m-1}, q^{m-1}, h_{m, s}(p, q))}p^{(m-1)((n-k)(1+k)+kl)}\\ \times [j+l(m-1)]_{p, q}q^{k(j+l(m-1))}\tilde{\mathfrak{S}}_{\frac{m-1}{m}; [m]_{p, q}}(l, j|p, q^m)x^j\tilde{\mathfrak{B}}_{\frac{m-1}{m}; [m]_{p, q}|k; p, q^m}(p^{n+l-k}x).
\end{multline}
Plugging $x=1$ into this identity gives the assertion.
\end{proof}
\begin{rem} 
If $m=1$, the recurrence relation \eqref{pqgsp} reduces to the $(p, q)$-deformed Spivey's formula obtained by the present author \cite{LO}, and 
if $p=1$, one obtains the recurrence relation for $q$-deformed generalized Spivey's formula given by Mansour and Schork \cite{TMMS}.

\end{rem}
In the next result we present an analogue of  Dobiński's formula for $(p, q)$-deformed generalized Bell polynomials $\tilde{\mathfrak{B}}_{s; h|n; p, q}(x)$.
\begin{prop}[Dobiński's formula]
The $(p, q)$-deformed generalized Bell polynomials $\tilde{\mathfrak{B}}_{\frac{m-1}{m}; [m]_{p, q}|n; p, q^m}(x)$ satisfy the following relation:
\begin{equation}\label{pqdfdob}
\tilde{\mathfrak{B}}_{\frac{m-1}{m}; [m]_{p, q}|n; p, q^m}(x)=\frac{1}{e_{p, q}(p^nx)}\sum_{k\geq 0}\frac{\left(\prod\limits_{j=0}^{n-1}[k+j(m-1)]_{p, q}\right)}{[k]_{p, q}!}p^{\binom{k}{2}}x^k,
\end{equation}
and consequently, the $(p, q)$-deformed generalized Bell numbers $\tilde{\mathfrak{B}}_{\frac{m-1}{m}; [m]_{p, q}}(n|p, q^m)$ satisfy
\begin{equation}\label{pqdfdob2}
\tilde{\mathfrak{B}}_{\frac{m-1}{m}; [m]_{p, q}}(n|p, q^m)=\frac{1}{e_{p, q}(p^n)}\sum_{k\geq 0}\frac{\left(\prod\limits_{j=0}^{n-1}[k+j(m-1)]_{p, q}\right)}{[k]_{p, q}!}p^{\binom{k}{2}}.
\end{equation}
\end{prop}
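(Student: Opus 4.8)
The plan is to bypass the combinatorial recursion and instead evaluate the operator $(X^m D_{p,q})^n$ directly on the monomial basis, then apply it termwise to the power series \eqref{exp2} defining $e_{p,q}$. The bridge back to the Bell polynomials is the identity already exploited above: since \eqref{expid} gives $E_{p,q}(-p^n x) = 1/e_{p,q}(p^n x)$, Definition \ref{defpqtch} can be rewritten as
\[
(X^m D_{p,q})^n e_{p,q}(x) = e_{p,q}(p^n x)\, T_{n;p,q}^{(m)}(x),
\]
so it suffices to compute the left-hand side explicitly and then divide off $e_{p,q}(p^n x)$.

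First I would establish, by induction on $n$, the monomial formula
\[
(X^m D_{p,q})^n x^k = \left(\prod_{j=0}^{n-1}[k+j(m-1)]_{p,q}\right) x^{k+n(m-1)}.
\]
The base case $n=0$ is the empty product, and the inductive step uses only $D_{p,q}x^{\ell} = [\ell]_{p,q}x^{\ell-1}$ together with $X^m x^{\ell} = x^{\ell+m}$: applying one further factor $X^m D_{p,q}$ to $x^{k+n(m-1)}$ contributes the factor $[k+n(m-1)]_{p,q}$ and shifts the exponent by $m-1$, which is exactly the $j=n$ term of the product and the passage from $k+n(m-1)$ to $k+(n+1)(m-1)$.

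Next I would apply this termwise to \eqref{exp2}, obtaining
\[
(X^m D_{p,q})^n e_{p,q}(x) = x^{n(m-1)} \sum_{k\geq 0} \frac{p^{\binom{k}{2}}}{[k]_{p,q}!}\left(\prod_{j=0}^{n-1}[k+j(m-1)]_{p,q}\right) x^{k}.
\]
Comparing this with the bridge identity and with Theorem \ref{vv}, which states $T_{n;p,q}^{(m)}(x) = x^{n(m-1)}\tilde{\mathfrak{B}}_{\frac{m-1}{m};[m]_{p,q}|n;p,q^m}(x)$, one sees that both sides carry the common factor $x^{n(m-1)}$; cancelling it and dividing by $e_{p,q}(p^n x)$ yields \eqref{pqdfdob} at once. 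Formula \eqref{pqdfdob2} then follows by setting $x=1$, since the generalized Bell numbers are the generalized Bell polynomials evaluated at $1$.

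The argument is essentially obstacle-free; the only points demanding attention are the exponent bookkeeping in the inductive step (ensuring the total shift accumulates as $n(m-1)$, not $nm$) and the formal justification for applying $(X^m D_{p,q})^n$ to \eqref{exp2} term by term, which is legitimate since every manipulation occurs at the level of formal power series in $x$.
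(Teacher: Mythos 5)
Your proposal is correct and follows essentially the same route as the paper: both rest on the monomial identity $(X^mD_{p,q})^nx^k=\bigl(\prod_{j=0}^{n-1}[k+j(m-1)]_{p,q}\bigr)x^{k+n(m-1)}$, apply it termwise to $e_{p,q}(x)$, and then invoke Definition \ref{defpqtch} together with Theorem \ref{vv} to cancel $x^{n(m-1)}$ and divide by $e_{p,q}(p^nx)$. The only difference is that you supply the short induction for the monomial identity, which the paper states without proof.
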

\begin{proof}
Since $\left(X^mD_{p, q}\right)^nx^k=\left(\prod\limits_{j=0}^{n-1}[k+j(m-1)]_{p, q}\right)x^{k+n(m-1)}$, we have 
$$E_{p, q}(-p^nx)\left(X^mD_{p, q}\right)^ne_{p, q}(x)=E_{p, q}(-p^nx)\sum_{k\geq 0}\frac{\left(\prod\limits_{j=0}^{n-1}[k+j(m-1)]_{p, q}\right)}{[k]_{p, q}!}p^{\binom{k}{2}}x^{k+n(m-1)}.$$
Then, by Definition \ref{defpqtch}
$$T_{n, p, q}^{(m)}(x)=E_{p, q}(-p^nx)\sum_{k\geq 0}\frac{\left(\prod\limits_{j=0}^{n-1}[k+j(m-1)]_{p, q}\right)}{[k]_{p, q}!}p^{\binom{k}{2}}x^{k+n(m-1)}.$$
Hence,  Theorem \ref{vv} yields
$$\tilde{\mathfrak{B}}_{\frac{m-1}{m}; [m]_{p, q}|n; p, q^m}(x)=\frac{1}{e_{p, q}(p^nx)}\sum_{k\geq 0}\frac{\left(\prod\limits_{j=0}^{n-1}[k+j(m-1)]_{p, q}\right)}{[k]_{p, q}!}p^{\binom{k}{2}}x^k.$$
Setting $x=1$, we arrive at \eqref{pqdfdob2}.
\end{proof}
\begin{rem}
Note that, for $m=1$, Eq. \eqref{pqdfdob} reduces to the $(p, q)$-deformed Dobiński formula given by the present author \cite{LO}, and further, for $p=q=m=1$, one obtains the ordinary Dobiński formula for Bell polynomials 
$$B_{n}(x)=\frac{1}{e}\sum_{k\geq 0}\frac{k}{k!}x^k.$$
Thus, Eq. \eqref{pqdfdob} can be considered as the $(p, q)$-deformed generalized Dobiński formula.
\end{rem}
\section{Concluding remarks}\label{sec4}
Let us mention that for $m=1$, the $(p, q)$-deformed Touchard polynomials $T_{n; p, q}^{(1)}(x)$ considered in this paper are different from those considered by Herscovici and Mansour in \cite{OHTM2017}, and by Kim et al. in \cite{TKOHTMSHR2016}.\\
It would be very interesting to find new connections of the $(p, q)$-deformed generalized Touchard polynomials $T_{n; p, q}^{(m)}(x)$ to well known combinatorial numbers and polynomials. In particular, it was shown in \cite{TMMS2013} that the undeformed  generalized Touchard polynomials of order $m=\frac{1}{2}$ can be  expressed in terms of Hermite polynomials
$$T_{n}^{(\frac{1}{2})}(x)=\left(\frac{i}{2}\right)^{n}H_{n}(-i\sqrt{x}).$$
On the other hand, Mansour and Schork \cite{TMMS} showed that the $q$-deformed generalized Touchard polynomials $T_{n; q}^{(m)}(x)$ are connected to $q$-deformed Laguerre polynomials and $q$-deformed Bessel polynomials, respectively, for $m=2$ and $m=-1$.\\
Therefore, it would be interesting to determine whether the above relations hold true for the $(p, q)$-deformed generalized Touchard polynomials $T_{(m)}^{n; p, q}(x)$ considered in this paper.\\
However, the $(p, q)$-deformed generalized Stirling numbers $\mathfrak{S}_{s; h}(n|p, q)$ should be established deeply, as well as several special choices for the parameters $s$ and $h$ should be exhibited explicitly. These and other questions warrant further study.
\section{Acknowledgment}
Research partially supported by the Austrian FWF Project P29355-N35 and by the Polish National Science Center (NCN) grant 2016/21/B/ST1/00628.\\
The author would like to thank Franz Lehner for careful reading  and helpful comments.

\end{document}